\documentclass[11pt, oneside]{amsart}   	% use "amsart" instead of "article" for AMSLaTeX format
\usepackage{geometry}                		% See geometry.pdf to learn the layout options. There are lots.
\geometry{letterpaper}                   		% ... or a4paper or a5paper or ... 
\usepackage{graphicx}				% Use pdf, png, jpg, or eps§ with pdflatex; use eps in DVI mode
								% TeX will automatically convert eps --> pdf in pdflatex		
\usepackage{enumitem}

\usepackage{amssymb,amsthm}

\usepackage[utf8]{inputenc}

%SetFonts
\usepackage{dsfont}  %morefonts

%SetFonts

\newtheorem{theorem}{Theorem}[section]
\newtheorem{lemma}[theorem]{Lemma}
\newtheorem{corollary}[theorem]{Corollary}
\newtheorem{prop}[theorem]{Proposition}
\theoremstyle{definition}

\theoremstyle{remark}
\newtheorem{rmk}{Remark}

\newcommand{\R}{{\mathds R}}

\newcommand{\B}{{\mathbb B}}

\newcommand{\grad}{\text{$\nabla$}}
\newcommand{\bs}[1]{{\boldsymbol  #1}}
\newcommand{\bb}[1]{{\mathbf  #1}}

\newcommand{\N}{N}
\newcommand{\HH}{{\mathcal H}}

\newcommand{\F}{{F}}

\DeclareMathOperator{\diam}{diam}

\usepackage{color}
%%% colored references
\usepackage{hyperref}
\hypersetup{
	colorlinks=true,
	linkcolor=blue,
	citecolor=cyan,
	filecolor=magenta,      
	urlcolor=cyan,
	pdftitle={Overleaf Example},
	pdfpagemode=FullScreen,
}
%%%

\DeclareMathOperator{\rnge}{ran}

\DeclareMathOperator{\supp}{supp}

\title{Notes on  the discretization  of TV-norm regularized inverse potential problems.}

\author{L. Baratchart}
\address{Projet FACTAS, INRIA, 2004 route des Lucioles, BP 93,
	Sophia-Antipolis, 06902 Cedex, FRANCE}
\email{Laurent.Baratchart@inria.fr}

\author{D.P.~Hardin}
\address{Department of Mathematics,
	Vanderbilt University,
	Nashville, TN 37240, USA}
\email{doug.hardin@vanderbilt.edu}

\author{C. Villalobos Guill\'en}
\address{Projet FACTAS, INRIA, 2004 route des Lucioles, BP 93,
	Sophia-Antipolis, 06902 Cedex, FRANCE}
\email{cristobal.villalobos.guillen@protonmail.com}
 
\begin{document}
	\maketitle

% \subsection{Notation}

% Things to define

% $\bb{e}_i$

% $\B(x,r)$

% $\langle \mu, \bb{f} \rangle \in\R^\N$ ($\mu$ scalar, $\bb{f}$ vector)

% $\langle \bs{\mu}, \bb{f} \rangle \in\R$ ($\bs{\mu}$ vector, $\bb{f}$ vector)

% the infinity norm defined for any $\bb{f}\in C_c(S)^\N$ as $\|\bb{f}\|_{\infty}:=\max_{x\in S}|\bb{f}(x)|$.

% $C_0(U)$ continuous functions with compact support contained in the open set $U$.

% $d(x,A)$ distance of point $x$ to set $A$.

% $\bb{u}_{\bs{\mu}}$

% $\bb{f}'$ (derivative)

% $\delta$ (Dirac's delta)

% %$\Lambda_n$ ($n$-dimensional Lebesgue measure)

% for a set $U$, its closure $\overline{U}$

% $\|\cdot\|_\infty$, the uniform norm

\noindent{\small{{\bf Abstract.} We describe a method to discretize optimization problems arising in the regularization of linear inverse problem having compact forward operator defined on  3-D valed measures, compactly supported on a fixed set. The criterion is a quadratic residual attached to the data, with an additive penalization of the total variation of the measure. }} 

\section{Introduction}
\section{Notation and preliminaries}
Let $S$ be a compact set in $\R^\N$, $C(S)$  the real-valued continuous functions on $S$,    and $\mathcal{M}(S)$ the finite signed Borel measures on $\R^\N$ supported on $S$. For simplicity, we assume throughout that $S$ is uncountable.
Equipped  with the {\em total variation norm} $\|\cdot\|_{TV}$,  with
$\|\cdot\|$ to mean Euclidean norm, the set $\mathcal{M}(S)^\N$
of vector valued measures with $N$ components
is  a Banach space, dual to   $C(S)^\N$ under the pairing
\[
  \langle \bs{\varphi},\bs{\mu}\rangle=\int \bs{\varphi}\cdot d\bs{\mu},
  \qquad \bs{\varphi}\in C(S)^\N,\quad \bs{\mu}\in \mathcal{M}(S)^\N,
\]
where a dot indicates the Euclidean scalar product.
We  also consider $\mathcal{M}(S)^\N$ as a locally convex space  endowed with the weak-star topology, the dual of which is  $C(S)^\N$ \cite[Theorem 3.10]{Rudin}. Because  $C(S)^\N$ is separable, the weak-star topology restricted to
weak-star compact sets is metrizable \cite[Theorem 3.16]{Rudin}.

%and $\rho$ a positive Borel measure on $\R^\N$ whose support $Q$ is disjoint from $S$.

Let  $\HH$ be a real Hilbert space and $A:\mathcal{M}(S)^\N\to \HH$ a linear operator   continuous in the weak-star topology on $\mathcal{M}(S)^\N$.
	By the Banach-Alaoglu Theorem, any TV-norm bounded sequence contains a weak-star convergent subsequence and, hence, the weak-star continuity of $A$ and the separability of $C(S)$ imply that $A$ is compact; i.e., $A$ maps TV-norm bounded sets into precompact sets in $\HH$.   

The focus of this paper is on the inverse problem associated with $A$; that is, to estimate $\bs{\mu}$ given information about $A\bs{\mu}$.   It is well-known that the compactness of $A$ makes this an ill-posed problem and moreover, for our main motivation of magnetic source recovery, $A$ is not even injective.  It is classical to use regularization methods to produce `good' approximate solutions to $A\bs{\mu}=f$ by adjoining a penalty functional.
In \cite{BVHNS} and \cite{BVH},   we considered regularization using the TV-norm (total variation norm) and showed this is consistent for recovering a certain class of measures
including those carried on a countable set. Nnamely, it is consistent to recover measures supported on a purely 1-unrectifiable set, even though $A$ is not injective; compare \cite{BrePikk}.

Specifically, we consider for $f\in  \HH$ and $\lambda>0$,   the functional
\begin{equation}
\label{eq|defcrit0}
\mathcal{F}_{\lambda,f}(\bs{\mu}):=
\|f-A\bs{\mu}\|_{\HH}^2+\lambda \|\bs{\mu}\|_{TV}.
\end{equation}
If  $V$ is a weak-star closed subspace of $\mathcal{M}(S)^\N$, then there exists a minimizer $\bs{\mu}_{\lambda,f}^V$ (not necessarily unique) of 
  $\mathcal{F}_{\lambda,f}$ over $V$; i.e., 
  \begin{equation} \label{eq|mulamfVreg}
 \mathcal{F}_{\lambda,f}(\bs{\mu}_{\lambda,f}^V)= \inf_{\bs{\mu}\in V} \mathcal{F}_{\lambda,f}(\bs{\mu}).
 \end{equation}
	We write  $\bs{\mu}_{\lambda,f}$ for $\bs{\mu}_{\lambda,f}^V$ when $V=\mathcal{M}(S)^\N$.  We also find it convenient to define
\begin{equation}
\label{eq|defcrit1}
\F_{\lambda,f}(\bs{\mu}):=\mathcal{F}_{\lambda,f}(\bs{\mu})-\|f\|_{\HH}^2
=-2\langle f,A\bs{\mu}\rangle +\|A\bs{\mu}\|_{\HH}^2+\lambda \|\bs{\mu}\|_{TV},
\end{equation}
and note that  $\bs{\mu}$ is a minimizer in $V$ of $\F_{\lambda,f}$ if and only if it is a minimizer in $V$ of $\mathcal{F}_{\lambda,f}$.

\bigskip

The box below contains the definition and properties of $A$ for the particular case of inverse magnetization problems on a slender set; a slender set is one of zero measure whose complement has all its components of infinite measure.
	
\noindent\fbox{\parbox{\textwidth}{
	
Let $S\subset \R^3$ be   slender and compact.
Let $\rho$ be a positive measure  on $Q$ and let $A:\mathcal{M}(S)^3\to L^2(Q,\rho)$ be the \emph{forward operator} 
 mapping $\bs{\mu}$ to the restriction of $\bb{b}(\bs{\mu})\cdot v$ on $Q$. We adopt the setting of \cite{BVHNS} and remark from Proposition 3.1 in that paper, that $A$ is a compact operator. 
 
 For $\bs{\mu}\in \mathcal{M}(S)^3$ and
$v$ a unit vector in $\R^3$,  the component  of the magnetic field
$\bb{b}(\bs{\mu})$ in the direction $v$ at $x\not \in S$ 
is given by
\begin{equation*}\label{eq|b3K}
b_v(\bs{\mu})(x):=v\cdot\bb{b}(\bs{\mu})(x)  =-\frac{\mu_0}{4\pi}\int \bb K_v(x-y)\cdot \, d\bs{\mu}(y),
\end{equation*}
where 
\begin{equation*}
\label{eq|noyauK}
\bb K_v(x)=\frac{v}{|x|^3}  -3  x\frac{v\cdot x}{|x|^5}=  \grad \left( \frac{v\cdot x}{|x|^3}\right).
\end{equation*}

Consider  a finite, positive Borel measure  $\rho$
with support contained in $Q$  and let $A:
\mathcal{M}(S)^3\to L^2(Q,\rho)$ be the operator
defined by
\begin{equation*}\label{eq|Adef}
A(\bs{\mu})(x):= b_v(\bs{\mu})(x), \qquad x\in Q.
\end{equation*}  
Since $\bb{K}_v$ is continuous on $\R^3\setminus \{0\}$ and  $Q$ and $S$ are positively separated, it follows that $b_v$  
is continuous on $Q$ and consequently
$A$ does indeed map  $\mathcal{M}(S)^3$ into $L^2(Q,\rho)$. 
 
 Moreover, $A^*$ is mapping $L^2(Q,\rho)$ into $C(S)^3$ (endowed with the $L^\infty$-norm of the Euclidean norm), viewed as a subspace of the dual space $(\mathcal{M}(S)^3)^*$. In fact, $\mathrm{Im}\,A^*$ consists of restrictions to $S$ of real analytic functions on $\R^3\setminus Q$. 
 }}

\section{\textit{R}-Topology and GSM spaces}

Recall  the weak-star topology
on $\mathcal{M}(S)^\N$ and let $(\bs{\Phi}_n)_{n\in\N}$ be a dense sequence of functions in $C(S)^N$. The countable family of semi-norms $\bs{\mu}\mapsto \int\bs{\Phi}_n\cdot d\bs{\mu}$ endows $\mathcal{M}(S)^\N$ with a metrizable topology
$\tau_\Phi$, which is coarser that the weak-star topology but coincides with it on  weak-star compact sets; see proof of \cite[Theorem 3.16]{Rudin}.
We let $\mathfrak{d}$ be any 
metric compatible with this topology, see \cite[Section 3.8]{Rudin} for an example. 
We define on $\mathcal{M}(S)^\N$ the {\em $R$-distance} as
\[
\mathfrak{d}_R(\bs{\nu},\bs{\mu}):=\mathfrak{d}(\bs{\nu},\bs{\mu})+
\left|\,\|\bs{\nu}\|_{TV}-\|\bs{\mu}\|_{TV}  \,\right|,
\qquad \bs{\nu},\bs{\mu}\in\mathcal{M}(S)^\N, 
\]
and call the corresponding topology the {\em $R$-topology} (we write $\stackrel{R}{\longrightarrow}$ to denote convergence in the $R$-topology).
A set $W\subset\mathcal{M}(S)^\N$ is {\em $R$-dense in $\mathcal{M}(S)^\N$} if, for any $\bs{\mu}\in\mathcal{M}(S)^\N$, there is a sequence $\bs{\mu}_n$ such that $\bs{\mu}_n$ converges weak-star to $\bs{\mu}$ and  $\|\bs{\mu}_n\|_{TV}$  converges to $\|\bs{\mu}\|_{TV}$ as $n\to\infty$.
This follows from the following lemma,
% Indeed, if
% $\bs{\mu}_n \stackrel{R}{\longrightarrow} \bs{\mu}$ we necessarily have that
% $\|\bs{\nu}_n\|_{TV}\to \|\bs{\mu}\|_{TV}$, so 
% $\|\bs{\nu}_n\|_{TV}$ is {\it a fortiori} bounded  and $\bs{\nu}_n$ and $\bs{\mi}$ are
% all contained in a closed ball of $\mathcal{M}(S)^\N$ which is weak-star compact. The weak-star convergence of  $\bs{\mu}_n$ to $\bs{\mu}$ now follows from the definition of $\mathfrak{d}$ and 
% $B(0,R)$ of must converge weak-star to $\bs{\mu}$ 
showing that the \textit{R}-topology is the one induced by weak-star convergence of measures together with weak-star convergence of their  total variation measures.  

\begin{lemma}\label{lemma|equivTopo}
The sequence $\bs{\mu}_n\in\mathcal{M}(S)^\N$ converges in the \textit{R}-topology to $\bs{\mu}\in\mathcal{M}(S)^\N$ if and only if $\bs{\mu}_n$ converges weak-star to $\bs{\mu}$ and $|\bs{\mu}_n|$ converges weak-star to $|\bs{\mu}|$.
\end{lemma}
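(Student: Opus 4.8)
The plan is to prove the two implications separately, using the identity $\mathfrak{d}_R(\bs{\mu}_n,\bs{\mu})=\mathfrak{d}(\bs{\mu}_n,\bs{\mu})+\bigl|\,\|\bs{\mu}_n\|_{TV}-\|\bs{\mu}\|_{TV}\,\bigr|$ to separate weak-star convergence of the measures from convergence of their total variation norms. Since $R$-convergence means both summands go to zero, the content of the lemma is that, \emph{given} weak-star convergence $\bs{\mu}_n\stackrel{*}{\rightharpoonup}\bs{\mu}$, the numerical convergence $\|\bs{\mu}_n\|_{TV}\to\|\bs{\mu}\|_{TV}$ is equivalent to the weak-star convergence $|\bs{\mu}_n|\stackrel{*}{\rightharpoonup}|\bs{\mu}|$ of the scalar total variation measures in $\mathcal{M}(S)$. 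Observe that $\|\bs{\mu}\|_{TV}=|\bs{\mu}|(S)=\langle \mathds{1}_S,|\bs{\mu}|\rangle$, so convergence of the norms is exactly testing the measures $|\bs{\mu}_n|$ against the single continuous function $\bs{1}$.

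First I would prove the direction $|\bs{\mu}_n|\stackrel{*}{\rightharpoonup}|\bs{\mu}|$ (together with $\bs{\mu}_n\stackrel{*}{\rightharpoonup}\bs{\mu}$) implies $R$-convergence. This is the easy direction: weak-star convergence $\bs{\mu}_n\stackrel{*}{\rightharpoonup}\bs{\mu}$ forces $\mathfrak{d}(\bs{\mu}_n,\bs{\mu})\to 0$ since $\mathfrak{d}$ is compatible with $\tau_\Phi$ and $\tau_\Phi$ is coarser than weak-star; meanwhile testing $|\bs{\mu}_n|\stackrel{*}{\rightharpoonup}|\bs{\mu}|$ against the constant function $\bs{1}\in C(S)$ gives $\|\bs{\mu}_n\|_{TV}=|\bs{\mu}_n|(S)\to|\bs{\mu}|(S)=\|\bs{\mu}\|_{TV}$, so the second summand vanishes and $\mathfrak{d}_R(\bs{\mu}_n,\bs{\mu})\to 0$.

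The substantive direction is the converse: assuming $R$-convergence (equivalently $\bs{\mu}_n\stackrel{*}{\rightharpoonup}\bs{\mu}$ with $\|\bs{\mu}_n\|_{TV}\to\|\bs{\mu}\|_{TV}$), I must deduce $|\bs{\mu}_n|\stackrel{*}{\rightharpoonup}|\bs{\mu}|$. The key input is lower semicontinuity of total variation: for any open $U\subseteq S$ one has $\liminf_n |\bs{\mu}_n|(U)\ge |\bs{\mu}|(U)$, a standard consequence of weak-star convergence of $\bs{\mu}_n$ and the representation of $|\bs{\mu}|(U)$ as a supremum over test functions $\bs{\varphi}\in C_c(U)^\N$ with $\|\bs{\varphi}\|\le 1$ of $\langle\bs{\varphi},\bs{\mu}\rangle$. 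The TV-bound from $\|\bs{\mu}_n\|_{TV}\to\|\bs{\mu}\|_{TV}$ ensures the sequence $(|\bs{\mu}_n|)$ is bounded in $\mathcal{M}(S)$, hence weak-star precompact by Banach--Alaoglu; let $\sigma$ be any weak-star limit point of a subsequence. Lower semicontinuity on open sets gives $\sigma\ge|\bs{\mu}|$ as measures, while testing against $\bs{1}$ and using the norm convergence gives $\sigma(S)=\lim\|\bs{\mu}_n\|_{TV}=\|\bs{\mu}\|_{TV}=|\bs{\mu}|(S)$; the inequality $\sigma\ge|\bs{\mu}|$ together with equality of total masses forces $\sigma=|\bs{\mu}|$. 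Since every subsequential weak-star limit equals $|\bs{\mu}|$ and the sequence lives in a fixed weak-star compact (hence metrizable) ball, the full sequence converges: $|\bs{\mu}_n|\stackrel{*}{\rightharpoonup}|\bs{\mu}|$.

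The main obstacle I anticipate is making the lower-semicontinuity step fully rigorous on open subsets of a general compact $S$ and then cleanly upgrading $\sigma\ge|\bs{\mu}|$ plus equal masses to $\sigma=|\bs{\mu}|$; the latter is immediate because $\sigma-|\bs{\mu}|$ is a nonnegative measure of total mass zero, hence the zero measure. A secondary technical point is the passage from subsequential limits to convergence of the whole sequence, which requires invoking the metrizability of the weak-star topology on the bounded set where the $|\bs{\mu}_n|$ reside, as recorded in the preliminaries. With those two points handled, both implications close and the equivalence follows.
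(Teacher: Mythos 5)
Your proof of the ``if'' direction and the overall architecture of the ``only if'' direction are sound and in fact mirror the paper's: both you and the authors extract a weak-star subsequential limit $\sigma$ (resp.\ $\mu^*$) of the bounded sequence $|\bs{\mu}_n|$, establish the domination $\sigma\ge|\bs{\mu}|$, and then use equality of total masses to force $\sigma=|\bs{\mu}|$, upgrading to convergence of the full sequence by metrizability on bounded sets. The one step that does not follow as written is the sentence ``Lower semicontinuity on open sets gives $\sigma\ge|\bs{\mu}|$ as measures.'' The two facts you have in hand are $|\bs{\mu}|(U)\le\liminf_m|\bs{\mu}_{n_m}|(U)$ for open $U$ (lower semicontinuity coming from $\bs{\mu}_{n}\stackrel{*}{\longrightarrow}\bs{\mu}$) and, from the Portmanteau theorem applied to $|\bs{\mu}_{n_m}|\stackrel{*}{\longrightarrow}\sigma$, the inequality $\sigma(U)\le\liminf_m|\bs{\mu}_{n_m}|(U)$ for open $U$. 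Both are lower bounds on the same $\liminf$; they do not compare $\sigma(U)$ with $|\bs{\mu}|(U)$, and indeed $\sigma(U)$ can be strictly smaller than $\liminf_m|\bs{\mu}_{n_m}|(U)$ on an open set (mass can escape to the boundary of $U$ in the limit), so the open-set form of lower semicontinuity cannot be chained with subsequential weak-star convergence to yield $\sigma(U)\ge|\bs{\mu}|(U)$.

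The gap is repairable by a standard rephrasing: state lower semicontinuity against test functions rather than open sets. For $\psi\in C(S)$ with $\psi\ge0$ and any $\bs{\varphi}\in C(S)^\N$ with $|\bs{\varphi}|\le\psi$ pointwise, one has $\langle\bs{\varphi},\bs{\mu}\rangle=\lim_m\langle\bs{\varphi},\bs{\mu}_{n_m}\rangle\le\lim_m\int\psi\,d|\bs{\mu}_{n_m}|=\int\psi\,d\sigma$, and taking the supremum over such $\bs{\varphi}$ gives $\int\psi\,d|\bs{\mu}|\le\int\psi\,d\sigma$, whence $|\bs{\mu}|\le\sigma$ by regularity. (Alternatively one can keep the open-set form but pass through compact sets and shrinking closed neighborhoods, using $\sigma(\overline{U})\ge\limsup_m|\bs{\mu}_{n_m}|(\overline{U})$.) This is in substance exactly what the paper does: its Lusin--Urysohn construction of the functions $\phi\,\bb{u}_\epsilon$ is precisely the production of a near-optimal $\bs{\varphi}$ dominated by (an approximation of) the indicator of $W$, leading to $|\bs{\mu}|(W)\le\mu^*(W)$ for measurable $W$. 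With that one step rewritten, your argument closes and is, if anything, slightly cleaner than the paper's set-by-set estimate.
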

\begin{proof}
  Taking a smooth compactly supported function equal to 1 on $S$, it is clear that weak-star convergence of $|\bs{\mu}_n|$ to $|\bs{\mu}|$ implies convergence of $\|\bs{\mu}_n\|_{TV}$ to $\|\bs{\mu}\|_{TV}$. Then 
  $\|\bs{\nu}_n\|_{TV}$ is  {\it a fortiori} bounded,  so
  $\bs{\mu}_n$ and $\bs{\mu}$ are
  all contained in a closed ball of $\mathcal{M}(S)^\N$ on which
  weak-star convergence of  $\bs{\mu}_n$ to $\bs{\mu}$ means
  that $\lim_n\mathfrak{d}(\bs{\mu}_n-\bs{\mu})=0$, by definition of $\mathfrak{d}$. Hence the ``if'' part of the statement of the lemma follows.
  Conversely, assume that $\bs{\mu}_n$ converges in the \textit{R}-topology to $\bs{\mu}$, whence in particular  $|\bs{\mu}_n|\to |\bs{\mu}|$.
  Again $\bs{\mu}_n$ and $\bs{\mu}$ lie
  in a closed ball of $\mathcal{M}(S)^\N$, and 
by definition of $\mathfrak{d}$ we get weak-star convergence of $\bs{\mu}_n$ to $\bs{\mu}$. To achieve the proof, it remains to establish that $|\bs{\mu}_n|$ converges weak-star to $|\bs{\mu}|$.
	Since the $|\bs{\mu}_n|$ are bounded, by the Banach-Alaoglu theorem there exists a subsequence $|\bs{\mu}_{n_m}|$ weak-star convergent to some positive measure $\mu^*$.
	
	As we next show,  it follows from the weak-star convergence of $\bs{\mu}_n$ and $|\bs{\mu}_{n_m}|$ that, for every measurable set $W\subset S$, $|\bs{\mu}|(W)\leq\mu^*(W)$.
		Indeed, by regularity of Radon measures, we may  assume that    $W$ is compact.
	Let $\bb{u}_{\bs{\mu}}$ denote the Radon-Nikodym derivative of $\bs{\mu}$ with respect to $|\bs{\mu}|$.
	Take an $\epsilon>0$ and use Lusin's theorem to find a compact set $E_\epsilon$ and a function $\bb{u}_\epsilon\in C_c(S)^\N$ such that $|\bs{\mu}|(S\setminus E_\epsilon)<\epsilon$, $\bb{u}_\epsilon=\bb{u}_{\bs{\mu}}$ on $E_\epsilon$ and $|\bb{u}_\epsilon|\leq 1$ on $S$.
	By Urysohn's Lemma, there exists a smooth function $\phi$ valued in $[0,1]$ such that 
$\phi$ equals 1 on $W$  and both $\mu^*($supp$(\phi)\setminus W)$ and $|\bs{\mu}|($supp$(\phi)\setminus W)$ are less than $\epsilon$.
	Now,
\begin{align*}
	|\bs{\mu}|(W)-2\epsilon &< |\bs{\mu}|(E_\epsilon\cap W)-\epsilon
	<\langle\bs{\mu},\phi\bb{u}_\epsilon\rangle
	=\lim_{n\to\infty}\langle \bs{\mu}_n,\phi\bb{u}_\epsilon\rangle
	\\
	&\leq\lim_{m\to\infty}\langle|\bs{\mu}_{n_m}|,\phi\rangle
	=\langle \mu^*,\phi\rangle< \mu^*(W)+\epsilon.
\end{align*}
	
	Now, taking again a smooth compactly supported function equal to 1 on $S$, the convergence of $\|\bs{\mu}_n\|$ to $\|\bs{\mu}\|$ implies that $\|\mu^*\|=\|\bs{\mu}\|$.
	Thus, using that for every measurable set $W\subset S$ holds  $|\bs{\mu}|(W)\leq\mu^*(W)$, we get that $\mu^*=|\bs{\mu}|$ and therefore $|\bs{\mu}_n|$ converges weak-star to $|\bs{\mu}|$.
\end{proof}

We next introduce a family of linear subspaces of $\mathcal{M}(S)^\N$ that will be used for approximating minimizers of \eqref{eq|defcrit1}.
Let $\mathcal{N}=\{\nu_k\}_{k\in I}$  be a denumerable family  of mutually singular Borel probability measures on $S$   with index set $I\subset {\mathbb N}$ and let $V=V(\mathcal{N})$ denote the space of measures $\bs{\mu}\in \mathcal{M}(S)^\N$ of the form
\begin{equation}
  \label{eq|defGSM}
  \bs{\mu}= \sum_{k\in I} {\bf m}_k \nu_k,
  \end{equation}
for ${\bf m}_k\in \R^\N$, $k\in I$. 
Note that $\|\bs{\mu}\|_{TV}= \sum_{k\in I} |{\bf m}_k | <\infty$.
We refer to $V$ as a {\em generated by  singular measures (GSM)} space and we will refer to the $\nu_k$'s as the {\it generating measures} of $V$.
Let ${\mathcal E}=\{E_k\}_{k\in I}$  be a Borel partition of $S$ such that $\nu_k(E_j)=\delta_{ij}$ for $i,j\in I$ (such partitions exist due to the pairwise mutual singularity of the $\nu_k$'s).
 We call such a partition \emph{compatible with $V$.}
For $\bs{\mu}\in {\mathcal M}(S)^\N$ we define the {\em projection of $\bs{\mu}$ onto $V$ with respect to ${\mathcal E}$} as
$$P^{\mathcal{E}}_V(\bs{\mu}):=\sum_{k\in I}\bs{\mu}(E_k)\nu_k.$$

We remark that if the positive cone of $\mathcal{N}=\{\nu_k\}_{k\in I}$  is   weak-star dense   in $\mathcal{M}(S)^+$, then $V(\mathcal{N})$ is $R$-dense 
in  $\mathcal{M}(S)^\N$. This is easily seen using the Hahn decomposition of a signed mesure in mutually sigular positive and negative parts.

%\comments{To consider a potential generalization, remark \ref{rmk|realization} and lemma \ref{thm|CPV} still seem to hold if in the definition of $V$ we ignore the $E_k$'s and just take a family (not necessarily countable) of pairwise singular probability measures $\nu_k$.}

\begin{rmk}\label{rmk|realization}
	V as above is isometrically isomorphic to the Banach space 
	$$\left\{({\bf m}_k)_{k\in I}\in(\R^\N)^I  :  \|({\bf m}_k)_{k\in I}\|_{1} <\infty\right\},$$
	where $\|({\bf m}_k)_{k\in I}\|_{1}:=\sum_{k\in I} |{\bf m}_k |$, and so its dual $V^*$ is isometrically isomorphic to the Banach space
	$$\left\{({\bf c}_k)_{k\in I}\in(\R^\N)^I : \|({\bf c}_k)_{k\in I}\|_{\infty}<\infty\right\}$$
	where $\|({\bf c}_k)_{k\in I}\|_{\infty}:=\sup_{k\in I} |{\bf c}_k |$.
\end{rmk}

\begin{lemma}\label{lemma|prevexample}
  For $n\in \mathbb{N}$, let $V_n$ be a   GSM space  with generating measures $\mathcal{N}_n=\{\nu_k^n\}_{k\in I_n}$ and let ${\mathcal E^n}=\{E^n_k\}_{k\in I_n}$ be a  Borel partition of $S$ compatible with $V_n$. Let us define
    $a_n:= \sup_{k\in I_n}\diam  E^n_k$.
	If $a_n\to 0$ as $n\to\infty$, then for any $\bs{\mu}\in\mathcal{M}(S)^\N$ the sequence $\bs{\mu}_n:=P_{V_n}^{\mathcal{E}^n}(\bs{\mu})$ converges to $\bs{\mu}$ in the $R$-topology and, hence,  $\bigcup_n V_n$ is $R$-dense in $\mathcal{M}(S)^\N$.
\end{lemma}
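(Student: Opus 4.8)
My plan is to reduce $R$-convergence of the (TV-bounded) sequence $\bs{\mu}_n$ to two statements: that $\bs{\mu}_n$ converges weak-star to $\bs{\mu}$ and that $\|\bs{\mu}_n\|_{TV}\to\|\bs{\mu}\|_{TV}$; by the definition of $\mathfrak{d}_R$ (together with the fact that $\mathfrak{d}$ metrizes the weak-star topology on weak-star compact sets) these two facts are exactly $\mathfrak{d}_R(\bs{\mu}_n,\bs{\mu})\to 0$, and they also match the hypotheses of Lemma~\ref{lemma|equivTopo}. First I would record two structural facts coming from compatibility of $\mathcal{E}^n$ with $V_n$: each $\nu_k^n$ is a probability measure with $\nu_k^n(S\setminus E_k^n)=0$, so it is concentrated on $E_k^n$, and the $E_k^n$ are pairwise disjoint. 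Consequently $\bs{\mu}_n=\sum_{k\in I_n}\bs{\mu}(E_k^n)\nu_k^n$ has total variation measure $|\bs{\mu}_n|=\sum_{k\in I_n}|\bs{\mu}(E_k^n)|\,\nu_k^n$, whence $\|\bs{\mu}_n\|_{TV}=\sum_{k\in I_n}|\bs{\mu}(E_k^n)|$.

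For the weak-star convergence I would fix $\bs{\varphi}\in C(S)^\N$, let $\omega_{\bs{\varphi}}$ be its modulus of continuity on the compact set $S$, and in each nonempty cell choose a point $x_k^n\in E_k^n$. Since both $\nu_k^n$ and the restriction of $\bs{\mu}$ to $E_k^n$ are supported in a set of diameter at most $a_n$, replacing $\bs{\varphi}$ by its value $\bs{\varphi}(x_k^n)$ makes the two leading contributions cancel, leaving
\[
\left|\bs{\mu}(E_k^n)\cdot\int\bs{\varphi}\,d\nu_k^n-\int_{E_k^n}\bs{\varphi}\cdot d\bs{\mu}\right|\le 2\,\omega_{\bs{\varphi}}(a_n)\,|\bs{\mu}|(E_k^n),
\]
using $|\bs{\mu}(E_k^n)|\le|\bs{\mu}|(E_k^n)$ and $\left|\int_{E_k^n}\bs{r}\cdot d\bs{\mu}\right|\le\int_{E_k^n}|\bs{r}|\,d|\bs{\mu}|$ for the remainder $\bs{r}=\bs{\varphi}-\bs{\varphi}(x_k^n)$. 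Summing over $k\in I_n$ and using that $\{E_k^n\}$ partitions $S$ gives $|\langle\bs{\varphi},\bs{\mu}_n-\bs{\mu}\rangle|\le 2\,\omega_{\bs{\varphi}}(a_n)\,\|\bs{\mu}\|_{TV}\to 0$.

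For the norms, summing $|\bs{\mu}(E_k^n)|\le|\bs{\mu}|(E_k^n)$ over the partition gives $\|\bs{\mu}_n\|_{TV}\le\|\bs{\mu}\|_{TV}$ for all $n$, so that $\limsup_n\|\bs{\mu}_n\|_{TV}\le\|\bs{\mu}\|_{TV}$; combined with the weak-star lower semicontinuity of $\|\cdot\|_{TV}$ and the weak-star convergence just proved, which gives $\|\bs{\mu}\|_{TV}\le\liminf_n\|\bs{\mu}_n\|_{TV}$, this forces $\|\bs{\mu}_n\|_{TV}\to\|\bs{\mu}\|_{TV}$. These two facts yield $\bs{\mu}_n\stackrel{R}{\longrightarrow}\bs{\mu}$, and since every $\bs{\mu}_n$ lies in $V_n\subset\bigcup_m V_m$ the $R$-density of $\bigcup_m V_m$ follows immediately. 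I expect the only genuinely nonelementary point to be this matching lower bound on $\|\bs{\mu}_n\|_{TV}$: the projection can only decrease total variation, so the reverse inequality cannot be obtained by a direct computation and must be supplied by lower semicontinuity.
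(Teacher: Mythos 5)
Your proposal is correct and follows essentially the same route as the paper: prove weak-star convergence of $\bs{\mu}_n$ to $\bs{\mu}$ together with $\|\bs{\mu}_n\|_{TV}\to\|\bs{\mu}\|_{TV}$, obtaining the upper bound $\|\bs{\mu}_n\|_{TV}=\sum_k|\bs{\mu}(E_k^n)|\le\|\bs{\mu}\|_{TV}$ by the same direct computation and the matching lower bound from weak-star lower semicontinuity of the total variation norm. The only real difference is in the weak-star convergence step: the paper rewrites $\langle\bs{\mu}_n,\bs{\varphi}\rangle$ as the integral against $\bs{\mu}$ of the simple function $\sum_k\langle\nu_k^n,\bs{\varphi}\rangle\chi_{E_k^n}$ and invokes dominated convergence, whereas you give the quantitative estimate $|\langle\bs{\varphi},\bs{\mu}_n-\bs{\mu}\rangle|\le 2\,\omega_{\bs{\varphi}}(a_n)\|\bs{\mu}\|_{TV}$ via the modulus of continuity; both are valid, and yours has the minor advantage of yielding an explicit rate.
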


\begin{rmk}
The existence of a sequence of compatible partitions $\{E^n_k\}_{k\in I_n}$ as above such that $\lim_{n\to\infty} a_n= 0$ can be expressed in terms of the supports $F^n_k=\text{supp} (\nu_k^n)$, $k\in I_n$, $n\in \mathbb{N}$; namely, the existence of such a sequence of partitions is equivalent to  the conditions
\begin{itemize}
\item $\lim_{n\to\infty}\sup_{k\in I_n} \diam F^n_k=0$ and
\item  $\lim_{n\to \infty} \sup_{x\in S}d(x,\bigcup_{k\in I_n}F^n_k)=0$.  
\end{itemize} We remark that  $\sup_{x\in S}d(x,\bigcup_{k\in I_n}F^n_k)$ is the {\em covering radius} of $\bigcup_{k\in I_n}F^n_k$ relative to $S$.

\end{rmk}
\begin{proof}
	Take any $\bs{\mu}\in\mathcal{M}(S)^\N$, and let $\bs{\mu}_n:=P_{V_n}^{\mathcal{E}^n}(\bs{\mu})$.
	By using countable additivity twice we have that
	$$
	\langle \bs{\mu}_n,\bs{\phi}\rangle 
	=\sum_{k\in I_n}\left\langle \langle\bs{\mu},\chi_{E_k^n}\rangle\nu_k^n,\bs{\phi}\right\rangle
	=\left\langle\bs{\mu},\sum_{k\in I_n}\langle \nu_k^n,\bs{\phi}\rangle\chi_{E_k^n}\right\rangle,
	$$
	that is, $\langle \bs{\mu}_n
,\bs{\phi}\rangle$ is equal to the integral against $\bs{\mu}$ of the simple function $f_n:=\sum_{k\in I_n}\langle \nu_k^n,\bs{\phi}\rangle\chi_{E_k^n}$.
	Now, for every $x\in S$ and $n\in\mathbb{N},$ let $k_n(x)\in I_n$ be such that $E_{k_n(x)}^n$ contains $x$.
	Then, $$|f_n(x)-\bs{\phi}(x)|\leq \max\{|\bs{\phi}(y)-\bs{\phi}(x)| : y\in E_{k_n(x)}^n\}.$$
	Hence, the continuity of $\bs{\phi}$ implies that $f_n\to\bs{\phi}$ pointwise.
	Since $|f_n|$ is dominated by the $|\bs{\mu}|$-integrable constant function $\max|\bs{\phi}|$, then $\bs{\mu}_n\to\bs{\mu}$ weak-star, by dominated convergence.
	
	From the Banach-Alaoglu theorem, it follows that $\liminf_{n\to\infty}\|\bs{\mu}_n\|_{TV}\geq\|\bs{\mu}\|_{TV}$.
Since by definition 
$$\|\bs{\mu}_n\|_{TV}=\sum_{k\in I_n}|\bs{\mu}(E^n_k)|\leq \sum_{k\in I_n}|\bs{\mu}|(E^n_k)=\|\bs{\mu}\|_{TV},$$ 
then $\|\bs{\mu}_n\|_{TV}\to\|\bs{\mu}\|_{TV}$.
By Lemma \ref{lemma|equivTopo} this implies that $\bs{\mu}_n\to\bs{\mu}$ in the $R$-topology, and therefore $\bigcup_nV_n$ is $R$-dense in $\mathcal{M}(S)^\N$.
\end{proof}

\section{Critical point equations of $\mathcal{F}_{\lambda, f}$}

Recall that $A:\mathcal{M}^\N(S)\to \HH$ is weak-star continuous.
Regardless of the context, $A^*$ will refer to the dual of $A$ and thus maps $\HH$ into $C(S)^\N$.

\begin{theorem}\label{thm|CPV}
	Let $V\subset\mathcal{M}(S)^\N$ be a linear space closed under the $\|\cdot\|_{TV}$ norm, $\lambda>0$ and $f\in  \HH$.
	A measure $\bs{\mu}\in V$ is a minimizer of $\mathcal{F}_{\lambda, f}$ over $V$  (i.e., $\bs{\mu}=\bs{\mu}_{\lambda,f}^V$ as in \eqref{eq|mulamfVreg} if such a minimizer exists)
	if and only if
	\begin{equation}
		\label{eq|generalCPV}
		\begin{array}{ccll}
			\langle \bs{\mu} , A^*(f-A\bs{\mu})\rangle &=&\frac{\lambda}{2} \|\bs{\mu}\|_{TV}, &  
			\text{ and}\\
			\left |\langle \bs{\nu} , A^*(f-A\bs{\mu})\rangle\right| &\leq& \frac{\lambda}{2}\|\bs{\nu}\|_{TV},&\forall \bs{\nu}\in V.
		\end{array}
	\end{equation}
	Moreover, $\bs{\mu}'\in V$ is another minimizer of $\mathcal{F}_{\lambda, f}$ over $V$ if and only if %(WARNING: not sure about second part):
\begin{enumerate}[label=(\alph*)]
\item $A(\bs{\mu}'-\bs{\mu})=0$,
%\item there is a $|\bs{\mu}_\lambda|$-measurable non-negative function $g$ and a positive measure $\nu_s\in\mathcal{M}(S)$, singular to $|\bs{\mu}_\lambda|$ and supported on  the set  $\left\{x\in S \colon  \left|A^*(f-A\bs{\mu}_\lambda)(x)\right|=\lambda/2 \right\}$, such that
%\begin{equation}\label{equivsoldec}
%d\bs{\mu}'_\lambda=gd\bs{\mu}_\lambda+2\frac{A^*(f-A\bs{\mu}_\lambda)}{\lambda}d \nu_s.
%\end{equation}
\item and $\langle \bs{\mu}' , A^*(f-A\bs{\mu})\rangle =\frac{\lambda}{2} \|\bs{\mu}'\|_{TV}$.
\end{enumerate}
\end{theorem}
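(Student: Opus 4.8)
The plan is to reduce everything to one exact algebraic identity for the convex functional $\F_{\lambda,f}$ and then read off both necessity and sufficiency from it. Write $\psi:=A^*(f-A\bs{\mu})\in C(S)^\N$, so that $\langle\psi,\bs{\nu}\rangle=\langle f-A\bs{\mu},A\bs{\nu}\rangle_{\HH}$ for every $\bs{\nu}\in V$ by definition of the adjoint. First I would record, for arbitrary $\bs{\mu},\bs{\mu}'\in V$, the expansion
\begin{equation*}
\F_{\lambda,f}(\bs{\mu}')-\F_{\lambda,f}(\bs{\mu})=\|A(\bs{\mu}'-\bs{\mu})\|_{\HH}^2-2\langle\psi,\bs{\mu}'-\bs{\mu}\rangle+\lambda\big(\|\bs{\mu}'\|_{TV}-\|\bs{\mu}\|_{TV}\big),
\end{equation*}
which follows by substituting into \eqref{eq|defcrit1} and using the polarization-type identity $\|a\|^2-\|b\|^2=\|a-b\|^2+2\langle b,a-b\rangle$ with $a=A\bs{\mu}'$, $b=A\bs{\mu}$. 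This identity is exact and uses no differentiability of the TV-norm.

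For necessity, assume $\bs{\mu}$ is a minimizer over $V$. To get the inequality in \eqref{eq|generalCPV}, fix $\bs{\nu}\in V$ and put $\bs{\mu}'=\bs{\mu}+t\bs{\nu}$ with $t>0$ in the identity; bounding $\|\bs{\mu}+t\bs{\nu}\|_{TV}-\|\bs{\mu}\|_{TV}\le t\|\bs{\nu}\|_{TV}$ by the triangle inequality, dividing by $t$ and letting $t\to0^+$ yields $\langle\psi,\bs{\nu}\rangle\le\tfrac{\lambda}{2}\|\bs{\nu}\|_{TV}$, and applying this to $-\bs{\nu}$ gives the stated two-sided bound. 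To get the equality in \eqref{eq|generalCPV}, I would instead perturb along $\bs{\mu}$ itself: the map $t\mapsto\F_{\lambda,f}((1+t)\bs{\mu})$ is differentiable near $t=0$ (there $|1+t|=1+t$, so the TV-term is $\lambda(1+t)\|\bs{\mu}\|_{TV}$) and attains its minimum at $t=0$ because $(1+t)\bs{\mu}\in V$; setting the derivative to zero gives exactly $\langle\bs{\mu},\psi\rangle=\tfrac{\lambda}{2}\|\bs{\mu}\|_{TV}$.

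For sufficiency together with the description of all minimizers, I would substitute the equality $\langle\bs{\mu},\psi\rangle=\tfrac{\lambda}{2}\|\bs{\mu}\|_{TV}$ back into the identity to collapse it to
\begin{equation*}
\F_{\lambda,f}(\bs{\mu}')-\F_{\lambda,f}(\bs{\mu})=\|A(\bs{\mu}'-\bs{\mu})\|_{\HH}^2+\big(\lambda\|\bs{\mu}'\|_{TV}-2\langle\bs{\mu}',\psi\rangle\big).
\end{equation*}
Under the inequality in \eqref{eq|generalCPV} (applied to $\bs{\nu}=\bs{\mu}'$) both summands on the right are nonnegative, so the right-hand side is $\ge0$ for every $\bs{\mu}'\in V$, proving $\bs{\mu}$ is a minimizer. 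Moreover $\bs{\mu}'$ is itself a minimizer precisely when the left-hand side vanishes, i.e.\ when both nonnegative summands vanish, which is exactly $A(\bs{\mu}'-\bs{\mu})=0$ (item (a)) and $\langle\bs{\mu}',\psi\rangle=\tfrac{\lambda}{2}\|\bs{\mu}'\|_{TV}$ (item (b)); conversely (a) and (b) force the value to equal $\F_{\lambda,f}(\bs{\mu})$.

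I expect the only delicate point to be the necessity of the \emph{equality} in \eqref{eq|generalCPV}: the two-sided inequality alone yields merely $|\langle\bs{\mu},\psi\rangle|\le\tfrac{\lambda}{2}\|\bs{\mu}\|_{TV}$, and upgrading this to equality genuinely requires the one-sided scaling perturbation along $\bs{\mu}$ rather than a generic direction. Everything else is bookkeeping, the one thing to keep in mind being that the TV-norm enters only through the triangle inequality and the homogeneity $\|(1+t)\bs{\mu}\|_{TV}=(1+t)\|\bs{\mu}\|_{TV}$ for small $t$, so no subdifferential machinery for $\|\cdot\|_{TV}$ is needed.
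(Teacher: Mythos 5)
Your proof is correct, but it takes a genuinely different and more elementary route than the paper. The paper invokes convex subdifferential calculus: it writes $\partial\mathcal{F}_{\lambda,f}(\bs{\mu})$ as the sum of the Fr\'echet gradient of the quadratic residual and $\lambda\,\partial(\|\cdot\|_V)(\bs{\mu})$, reads \eqref{eq|generalCPV} off the standard description of the subdifferential of a norm (norming functionals), and then handles the ``moreover'' part by a separate argument: convexity forces $\|f-A\bs{\mu}\|_{\HH}=\|f-A\bs{\mu}'\|_{\HH}$ along the segment of minimizers, and strict convexity of the Hilbert norm upgrades this to $A\bs{\mu}=A\bs{\mu}'$. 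You instead work from the single exact expansion
\begin{equation*}
\F_{\lambda,f}(\bs{\mu}')-\F_{\lambda,f}(\bs{\mu})=\|A(\bs{\mu}'-\bs{\mu})\|_{\HH}^2-2\langle\psi,\bs{\mu}'-\bs{\mu}\rangle+\lambda\bigl(\|\bs{\mu}'\|_{TV}-\|\bs{\mu}\|_{TV}\bigr),\qquad \psi:=A^*(f-A\bs{\mu}),
\end{equation*}
getting the inequality in \eqref{eq|generalCPV} from one-sided directional perturbations, the equality from the scaling perturbation $(1+t)\bs{\mu}$ (you are right that this is the one genuinely delicate point, and your homogeneity argument handles it), and then, after substituting the equality back in, the collapsed form $\|A(\bs{\mu}'-\bs{\mu})\|_{\HH}^2+\bigl(\lambda\|\bs{\mu}'\|_{TV}-2\langle\psi,\bs{\mu}'\rangle\bigr)\ge 0$ as a sum of two nonnegative terms. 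This gives sufficiency and the full characterization (a)--(b) in one stroke, with no appeal to the sum rule for subdifferentials or to strict convexity. What your version loses is only expository: the paper's phrasing in terms of norming functionals is what feeds directly into the duality discussion after the theorem (the identification of $2A^*(f-A\bs{\mu})/\lambda$ as a norming functional for $\bs{\mu}$ in $V$, used in Corollary~\ref{coro|densopt}), so if one adopts your proof that interpretation should still be recorded as a remark.
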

\begin{proof}
	For a convex functional $\mathcal{G}$ on $V$ and a $\bs{\mu}\in V$, let $\partial\mathcal{G}(\bs{\mu})$ denote the subdifferential of $\mathcal{G}$ at $\bs{\mu}\in V$, that is, $\partial\mathcal{G}(\bs{\mu}):=\{\mathcal{N}\in V^*\ :\ \mathcal{G}(\bs{\nu})-\mathcal{G}(\bs{\mu})\geq \langle\mathcal{N},\bs{\nu}-\bs{\mu}\rangle,$ for any $\bs{\nu}\in V\}$.
	Let $\mathcal{R}$ be the convex functional on $V$ that sends 
$\bs{\mu}$ to $ \|f-A\bs{\mu}\|_{\HH}^2$, and note that $\mathcal{R}$ has a Fr\'echet derivative, $\grad\mathcal{R}$, such that for any $\bs{\nu}\in V$, $\langle \grad\mathcal{R},\bs{\nu}\rangle= \langle\bs{\nu},2A^*(A\bs{\mu}-f)\rangle$.
	Let $\|\cdot\|_{V}$ indicate the restriction of the total variation norm to $V$ and observe that
	$$\partial(\|\cdot\|_{V})(\bs{\mu})=
	\left\{\begin{array}{lr}
		\left\{\mathcal{N}\in V^*\ :\ \|\mathcal{N}\|_{V^*}=1 \text{ and } \langle\mathcal{N},\bs{\mu}\rangle=\|\bs{\mu}\|_{TV}\right\} & \text{if } \bs{\mu}\neq0 \\
		\left\{\mathcal{N}\in V^*\ :\ \|\mathcal{N}\|_{V^*}\leq1\right\} & \text{if } \bs{\mu}=0
	\end{array}\right.,$$ 
	where $\|\mathcal{N}\|_{V^*}$ represents the dual norm of $\mathcal{N}$ with respect to $\|\cdot\|_{V}$.
	Then, by properties of subdifferentials \cite[Chapter III, section 2]{ekeland_turnbull_1983}, we obtain:
	$$
	\partial\mathcal{F}_{\lambda,f}(\bs{\mu}) = \{\grad\mathcal{R}\} + \lambda\partial(\|\cdot\|_{V})(\bs{\mu}).
	$$
	Thus, since $\bs{\mu}$ being a minimizer of $\mathcal{F}_{\lambda,f}$ over $V$ is equivalent to $0\in \partial\mathcal{F}_{\lambda,f}(\bs{\mu})$,
	we have that $\bs{\mu}$ is a minimizer of $\mathcal{F}_{\lambda, f}$ over $V$ if and only if ${\frac{1}{\lambda}\grad\mathcal{R}}\in{\partial(\|\cdot\|_{V})(\bs{\mu})}$.
	Therefore, $\bs{\mu}$ is a minimizer of $\mathcal{F}_{\lambda, f}$ over $V$ if and only \eqref{eq|generalCPV} is satisfied.
	
	Now take $\bs{\mu}\in V$ to be a minimizer of $\mathcal{F}_{\lambda, f}$ over $V$ and let $\bs{\mu}'\in V$.
	Assume that $\bs{\mu}$ and $\bs{\mu}'$ satisfy {\it(a)} and {\it(b)}.
	Then, since $\bs{\mu}$ satisfies the second equation of \eqref{eq|generalCPV} and {\it(a)} holds, then it follows that the second equation of \eqref{eq|generalCPV} holds for $\bs{\mu}'$ as well.
	Also, combining {\it(a)} and {\it(b)}, we get the first equation of \eqref{eq|generalCPV} for $\bs{\mu}'$.
	
	Finally, assume that  $\bs{\mu}'$ is a minimizer of $\mathcal{F}_{\lambda, f}$ over $V$.
	By convexity of norms %(which implies convexity of $\|\cdot\|_{\HH}^2$) 
	and of $\mathcal{F}_{\lambda, f}$, and the minimality of $\bs{\mu}$ and $\bs{\mu}$', we get that $\|f-A\bs{\mu}\|_2=\|f-A\bs{\mu}'\|_2$.
	Then, using the	strict convexity of $\|\cdot\|_2$, it follows that $f-A\bs{\mu}=f-A\bs{\mu}'$, hence {\it(a)}.
	To finish, note that {\it(b)} follows from {\it(a)} and the first equation of \eqref{eq|generalCPV}.
\end{proof}

Theorem \ref{thm|CPV} says that $\bs{\mu}\in V$  is a minimizer of \eqref{eq|mulamfVreg} if and only if 
integration against
$2A^*(f-A\bs{\mu})/\lambda$ is a norming functional for $\bs{\mu}$ in $V$.
Recalling that GSM spaces are Banach spaces isometric to a subspaces of $l^1(\R^\N)$
(see remark \ref{rmk|realization}), we obtain the following corollary.

\begin{corollary}\label{coro|CPV}
  Let $V\subset\mathcal{M}(S)^\N$ be a GSM space, $\lambda>0$ and $f\in  \HH$. 
  A measure $\bs{\mu}=\sum_{k\in I} {\bf m}_k \nu_k\in V$ is a minimizer of $\mathcal{F}_{\lambda, f}$ over $V$  (i.e., $\bs{\mu}=\bs{\mu}_{\lambda,f}^V$ as in \eqref{eq|mulamfVreg})
if and only if
\begin{equation}
\label{eq|CPV}
\begin{array}{ccll}
\langle \nu_k , A^*(f-A\bs{\mu})\rangle &=&\frac{\lambda}{2} \frac{\mathbf{m}_k}{|\mathbf{m}_k|}, & \forall k\in I \text{ s.t. } \mathbf{m}_k\neq 0, 
\text{ and}\\
\left |\langle \nu_k , A^*(f-A\bs{\mu})\rangle\right| &\leq& \frac{\lambda}{2},&\forall k\in I.
\end{array}
\end{equation}
Moreover, $\bs{\mu}'=\sum_{k\in I} {\bf m}'_k \nu_k\in V$ is another minimizer of $\mathcal{F}_{\lambda, f}$ over $V$ if and only if 
\begin{enumerate}[label=(\alph*)]
\item $A(\bs{\mu}'-\bs{\mu})=0$,
\item and there exist $p_k\geq0$ for every $k\in I$ such that $p_k=0$ if $\left |\langle \nu_k , A^*(f-A\bs{\mu})\rangle\right| < \frac{\lambda}{2}$ and
\[
\bb{m}'_k=\left\{\begin{array}{lr}
	p_k\frac{\bb{m}_k}{|\bb{m}_k|}  &  \text{if }\ \mathbf{m}_k\neq 0  \\
	p_k\frac{2}{\lambda}\langle \nu_k , A^*(f-A\bs{\mu}) \rangle  & \text{otherwise.}
\end{array}\right.
\]
\end{enumerate}
\end{corollary}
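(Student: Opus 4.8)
The plan is to transfer Theorem \ref{thm|CPV} to the coordinate description of $V$ furnished by Remark \ref{rmk|realization}. Throughout I would set $\bb{c}_k:=\langle\nu_k,A^*(f-A\bs{\mu})\rangle\in\R^\N$, so that by countable additivity, for any $\bs{\nu}=\sum_k\bb{n}_k\nu_k\in V$ one has $\langle\bs{\nu},A^*(f-A\bs{\mu})\rangle=\sum_k\bb{n}_k\cdot\bb{c}_k$. Under the isometry $V\cong\{(\bb{m}_k):\|(\bb{m}_k)\|_1<\infty\}$ of Remark \ref{rmk|realization}, the functional $\bs{\nu}\mapsto\langle\bs{\nu},A^*(f-A\bs{\mu})\rangle$ is thus identified with the element $(\bb{c}_k)_{k\in I}$ of $V^*$, whose dual norm is $\sup_k|\bb{c}_k|$. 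The whole argument is then a translation of the two lines of \eqref{eq|generalCPV} into statements about $(\bb{c}_k)$ and $(\bb{m}_k)$.

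First I would dispose of the second lines. The inequality $|\langle\bs{\nu},A^*(f-A\bs{\mu})\rangle|\leq\frac{\lambda}{2}\|\bs{\nu}\|_{TV}$ for all $\bs{\nu}\in V$ says exactly that the dual norm of $(\bb{c}_k)$ is at most $\frac{\lambda}{2}$, i.e. $\sup_k|\bb{c}_k|\leq\frac{\lambda}{2}$, which is the second line of \eqref{eq|CPV}. One implication is the estimate $|\sum_k\bb{n}_k\cdot\bb{c}_k|\leq\sum_k|\bb{n}_k||\bb{c}_k|\leq\frac{\lambda}{2}\|\bs{\nu}\|_{TV}$; for the converse I would test against the single-atom measures $\bb{n}\,\nu_k\in V$, which have $\|\bb{n}\,\nu_k\|_{TV}=|\bb{n}|$, and optimize over $\bb{n}\in\R^\N$ to recover $|\bb{c}_k|\leq\frac{\lambda}{2}$ for each $k$.

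Next, granting $\sup_k|\bb{c}_k|\leq\frac{\lambda}{2}$, I would read off the first line. The first equation of \eqref{eq|generalCPV} becomes $\sum_k\bb{m}_k\cdot\bb{c}_k=\frac{\lambda}{2}\sum_k|\bb{m}_k|$, while each summand satisfies the termwise bound $\bb{m}_k\cdot\bb{c}_k\leq|\bb{m}_k|\,|\bb{c}_k|\leq\frac{\lambda}{2}|\bb{m}_k|$. Hence equality of the sums forces equality in every term with $\bb{m}_k\neq0$: the equality case of Cauchy--Schwarz gives that $\bb{c}_k$ is a nonnegative multiple of $\bb{m}_k$, and tightness of the norm bound gives $|\bb{c}_k|=\frac{\lambda}{2}$, so together $\bb{c}_k=\frac{\lambda}{2}\,\bb{m}_k/|\bb{m}_k|$, the first line of \eqref{eq|CPV}. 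The converse is immediate, since that identity makes each nonzero term equal to $\frac{\lambda}{2}|\bb{m}_k|$. This establishes the equivalence between minimality and \eqref{eq|CPV}.

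Finally, for the description of the other minimizers, condition (a) is verbatim that of Theorem \ref{thm|CPV}, and I would treat (b) by the same tightness analysis applied to $\bs{\mu}'=\sum_k\bb{m}'_k\nu_k$: the identity $\langle\bs{\mu}',A^*(f-A\bs{\mu})\rangle=\frac{\lambda}{2}\|\bs{\mu}'\|_{TV}$ reads $\sum_k\bb{m}'_k\cdot\bb{c}_k=\frac{\lambda}{2}\sum_k|\bb{m}'_k|$, forcing, for each $k$ with $\bb{m}'_k\neq0$, both $|\bb{c}_k|=\frac{\lambda}{2}$ and $\bb{m}'_k$ a nonnegative multiple of $\bb{c}_k$. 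Setting $p_k:=|\bb{m}'_k|\geq0$ then yields $\bb{m}'_k=p_k\frac{2}{\lambda}\bb{c}_k$, with $p_k=0$ whenever $|\bb{c}_k|<\frac{\lambda}{2}$; and when moreover $\bb{m}_k\neq0$ the already-proved first line of \eqref{eq|CPV} gives $\frac{2}{\lambda}\bb{c}_k=\bb{m}_k/|\bb{m}_k|$, producing the two cases of the stated formula. The reverse implication is a direct substitution, giving $\bb{m}'_k\cdot\bb{c}_k=\frac{\lambda}{2}p_k=\frac{\lambda}{2}|\bb{m}'_k|$ in each case. The one genuinely delicate point is precisely the case $\bb{m}_k=0$ but $\bb{m}'_k\neq0$, where the direction $\bb{m}_k/|\bb{m}_k|$ is undefined; there the tightness analysis is what forces $|\bb{c}_k|=\frac{\lambda}{2}$ and the alignment of $\bb{c}_k$ with $\bb{m}'_k$, and this is exactly what motivates the alternative expression $p_k\frac{2}{\lambda}\bb{c}_k$.
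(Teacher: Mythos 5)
Your proposal is correct and follows essentially the same route as the paper: it reduces to Theorem \ref{thm|CPV} via the $\ell^1$--$\ell^\infty$ identification of Remark \ref{rmk|realization} and then extracts \eqref{eq|CPV} and the parametrization of the other minimizers from the equality case of the termwise bound $\bb{m}_k\cdot\bb{c}_k\leq |\bb{m}_k|\,|\bb{c}_k|\leq\frac{\lambda}{2}|\bb{m}_k|$, which is exactly the tightness argument in the paper's displayed inequality chain. You merely spell out in more detail the first equivalence, which the paper delegates to the ``norming functional'' discussion preceding the corollary.
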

\begin{proof}
	Following our previous discussion what is left to prove is that, when taking the main assumptions of the corollary and {\it(a)}, then {\it(b)} of Corollary~\ref{coro|CPV} is equivalent to {\it(b)} of Theorem~\ref{thm|CPV}.
	This equivalence can be seen noticing that, in view of the second equation of $\eqref{eq|CPV}$, the inequality
	\begin{align*}
		\frac{\lambda}{2}\sum_{k\in I}\bb{m}'_k\cdot\frac{\bb{m}'_k}{|\bb{m}'_k|} &=\frac{\lambda}{2}\|\bs{\mu}'\|_{TV}
		\ \geq\  \langle \bs{\mu}' , A^*(f-A\bs{\mu})\rangle 
		= \sum_{k\in I} \langle \bb{m}'_k\nu_k , A^*(f-A\bs{\mu}) \rangle\\
		&=\sum_{{\tiny \begin{array}{c}	k\in I\\\mathbf{m}_k\neq 0\end{array}}} \bb{m}'_k\cdot\left(\frac{\lambda}{2}\frac{\bb{m}_k}{|\bb{m}_k|}\right) 
		+ \sum_{{\tiny \begin{array}{c}	k\in I\\\mathbf{m}_k=0\end{array}}} \bb{m}'_k\cdot\langle \nu_k , A^*(f-A\bs{\mu}) \rangle
	\end{align*}
	is an equality if and only if the following two conditions are satisfied:
	\begin{itemize}
		\item if $\mathbf{m}_k\neq0$, then $\bb{m}'_k$ is parallel to $\bb{m}_k$;
		\item if $\mathbf{m}_k=0$, then $\bb{m}'_k$ is parallel to $\langle \nu_k , A^*(f-A\bs{\mu}) \rangle$ and $|\langle \nu_k , A^*(f-A\bs{\mu}) \rangle|=\frac{\lambda}{2}$.
	\end{itemize}
\end{proof}

Note that Theorem \ref{thm|CPV} and Corollary~\ref{coro|CPV} do not assume that a minimizer  to
$\mathcal{F}_{\lambda,f}$ over $V$ exists.
When $V$ is weak-star closed, such a minimizer does exist and
the significance  of the critical point equation \eqref{eq|generalCPV}
can be deepened. Consider the subspace of $C(S)^\N$
defined by  $V^\perp:=\{\bb{p}\in C(S)^\N:\langle\bs{\nu},\bb{p}\rangle=0,\ \forall\bs{\nu}\in V\}$, and let 
$\|\bb{f}\|_{\infty}:=\max_{x\in S}|\bb{f}(x)|$ indicate
the norm of $\bb{f}\in C(S)^\N$. 
	Since $V$ is weak-star closed in $\mathcal{M}(S)^\N$,
the Hahn-Banach theorem (valid in any locally convex topological space)   implies that the space of those  
$\bs{\nu}\in\mathcal{M}(S)^\N$ such that $\langle\bs{\nu},\bf{f}\rangle=0$ for all
${\bf{f}}\in V^\perp$ is equal to $V$. Thus,
we get from the Hahn-Banach theorem again that $V$ is isometrically isomorphic to $(C(S)^\N/V^{\perp})^*$, where the quotient space $C(S)^\N/V^{\perp}$
is equipped with the norm  $\|[\bb{f}]\|_{\infty/\perp}:=\inf_{\bb{p}\in V^\perp}\|\bb{p}+\bb{f}\|_\infty$ for $[\bb{f}]\in C(S)^\N/V^{\perp}$, see \cite[Theorem 7.2]{duren_1970}.

	\begin{corollary}
\label{coro|densopt}
Let $V\subset\mathcal{M}(S)^3$ be a weak-star closed linear space, $\lambda>0$ and $f\in  \HH$.
Then, $\bs{\mu}$ is a minimizer in \eqref{eq|mulamfVreg}
if and only if there exists a sequence $\{\bb{p}_n\}\subset V^\perp$,
$n\in\mathbb{N}$, for which 
	\begin{equation}\label{eq|CPquotient}
			\begin{array}{ccll}
		\lim_{n\to\infty}	A^*(f-A\bs{\mu})+\bb{p}_n&=&\frac{\lambda}{2} \bb{u}_{\bs{\mu}}\quad \text{in}\quad L^1(|\bs{\mu}|)^\N,&  
			\text{ and}\\
		\lim_{n\to\infty}\left\|\ A^*(f-A\bs{\mu})+\bb{p}_n\ \right\|_{\infty}&=& \frac{\lambda}{2}.
		\end{array}
	\end{equation}
In the case that $V$ is a GSM space with generating measures $\{\nu_n\}_{k\in I}$, there is $\bb{p}\in V^\perp\subset L^\infty(\sum_k\nu_k)^\N$ such that
\begin{equation*}\label{eq|CPquotienti}
			\begin{array}{ccll}
		A^*(f-A\bs{\mu})+\bb{p}&=&\frac{\lambda}{2} \bb{u}_{\bs{\mu}}\quad |\bs{\mu}|-\text{a.e.},&  
			\text{ and}\\
		\left\|\ A^*(f-A\bs{\mu})+\bb{p}\ \right\|_{L^\infty(S)^3}&\leq& \frac{\lambda}{2}.
		\end{array}
	\end{equation*}
	% Moreover, \comments{(this part is incomplete)} $\bs{\mu}'\in V$ is another minimizer of $\mathcal{F}_{\lambda, f}$ over $V$ if and only if 
% \begin{enumerate}[label=(\alph*)]
% \item $A(\bs{\mu}'-\bs{\mu})=0$,
% \item ...
% %\item there is a $|\bs{\mu}_\lambda|$-measurable non-negative function $g$ and a positive measure $\nu_s\in\mathcal{M}(S)$, singular to $|\bs{\mu}_\lambda|$ and supported on  the set  $\left\{x\in S \colon  \left|A^*(f-A\bs{\mu}_\lambda)(x)\right|=\lambda/2 \right\}$, such that
% %\begin{equation*}\label{eq|equivsoldec}
% %d\bs{\mu}'_\lambda=gd\bs{\mu}_\lambda+2\frac{A^*(f-A\bs{\mu}_\lambda)}{\lambda}d \nu_s.
% %\end{equation*}
% %\item and $\langle \bs{\mu}' , A^*(f-A\bs{\mu})\rangle =\frac{\lambda}{2} \|\bs{\mu}'\|_{TV}$.
% \end{enumerate}
\end{corollary}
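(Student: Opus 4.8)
The plan is to translate the critical point equations of Theorem~\ref{thm|CPV} into the language of the quotient duality $V\cong(C(S)^\N/V^\perp)^*$ recorded just above, and then to show that the $L^1(|\bs{\mu}|)^\N$ convergence is an automatic consequence of near-optimality in the sup-norm. Throughout I write $\bb{g}:=A^*(f-A\bs{\mu})\in C(S)^\N$ and assume first $\bs{\mu}\neq0$ (the case $\bs{\mu}=0$ is degenerate and handled by convention, since $L^1(|\bs{\mu}|)^\N$ is then trivial). Since every $\bs{\nu}\in V$ annihilates $V^\perp$, the second inequality in \eqref{eq|generalCPV} says exactly that the functional $\bs{\nu}\mapsto\langle\bs{\nu},\bb{g}\rangle$ has norm at most $\lambda/2$ on $V$; under the isometry $V\cong(C(S)^\N/V^\perp)^*$ this is the statement $\|[\bb{g}]\|_{\infty/\perp}=\inf_{\bb{p}\in V^\perp}\|\bb{g}+\bb{p}\|_\infty\le\lambda/2$. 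The first equation in \eqref{eq|generalCPV}, namely $\langle\bs{\mu},\bb{g}\rangle=\tfrac{\lambda}{2}\|\bs{\mu}\|_{TV}$, then forces equality, so $\inf_{\bb{p}\in V^\perp}\|\bb{g}+\bb{p}\|_\infty=\lambda/2$, and a minimizing sequence $\{\bb{p}_n\}\subset V^\perp$ for this infimum supplies the second limit in \eqref{eq|CPquotient}.

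The heart of the forward direction is to check that this same sequence already produces the first limit. Set $M_n:=\|\bb{g}+\bb{p}_n\|_\infty\to\lambda/2$. Because $\bb{p}_n\in V^\perp$ and $\bs{\mu}=\bb{u}_{\bs{\mu}}|\bs{\mu}|\in V$, we have $\int(\bb{g}+\bb{p}_n)\cdot\bb{u}_{\bs{\mu}}\,d|\bs{\mu}|=\langle\bs{\mu},\bb{g}\rangle=\tfrac{\lambda}{2}\|\bs{\mu}\|_{TV}=\tfrac{\lambda}{2}|\bs{\mu}|(S)$, using the first equation of \eqref{eq|generalCPV}. Combining this with the pointwise bounds $|\bb{g}+\bb{p}_n|\le M_n$ and $|\bb{u}_{\bs{\mu}}|=1$ and expanding the square gives
\[
\int\Bigl|\,\bb{g}+\bb{p}_n-\tfrac{\lambda}{2}\bb{u}_{\bs{\mu}}\,\Bigr|^2\,d|\bs{\mu}|\ \le\ \Bigl(M_n^2-\tfrac{\lambda^2}{4}\Bigr)\,|\bs{\mu}|(S)\ \longrightarrow\ 0.
\]
Hence $\bb{g}+\bb{p}_n\to\tfrac{\lambda}{2}\bb{u}_{\bs{\mu}}$ in $L^2(|\bs{\mu}|)^\N$, and since $|\bs{\mu}|(S)<\infty$, a fortiori in $L^1(|\bs{\mu}|)^\N$, which is the first limit. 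For the converse I would run these identities backwards: given a sequence satisfying \eqref{eq|CPquotient}, the bound $|\langle\bs{\nu},\bb{g}\rangle|=|\langle\bs{\nu},\bb{g}+\bb{p}_n\rangle|\le M_n\|\bs{\nu}\|_{TV}$ yields the second inequality of \eqref{eq|generalCPV}, while $\langle\bs{\mu},\bb{g}\rangle=\int(\bb{g}+\bb{p}_n)\cdot\bb{u}_{\bs{\mu}}\,d|\bs{\mu}|\to\tfrac{\lambda}{2}\int|\bb{u}_{\bs{\mu}}|^2\,d|\bs{\mu}|=\tfrac{\lambda}{2}\|\bs{\mu}\|_{TV}$ yields the first; Theorem~\ref{thm|CPV} then certifies that $\bs{\mu}$ is a minimizer.

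For the GSM refinement I would bypass the limiting procedure and exhibit $\bb{p}$ explicitly, using the sharper Corollary~\ref{coro|CPV}. Let $\{E_k\}_{k\in I}$ be a partition compatible with $V$, so that $\bb{u}_{\bs{\mu}}=\bb{m}_k/|\bb{m}_k|$ on $E_k$ whenever $\bb{m}_k\neq0$, and define the bounded $\sum_k\nu_k$-measurable field $\bb{q}$ by $\bb{q}:=\tfrac{\lambda}{2}\,\bb{m}_k/|\bb{m}_k|$ on $E_k$ if $\bb{m}_k\neq0$ and $\bb{q}:=\langle\nu_k,\bb{g}\rangle$ on $E_k$ if $\bb{m}_k=0$. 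By the two equations of \eqref{eq|CPV}, in either case $\int\bb{q}\,d\nu_k=\langle\nu_k,\bb{g}\rangle$, so $\bb{p}:=\bb{q}-\bb{g}$ satisfies $\int\bb{p}\,d\nu_k=0$ for all $k$ and hence lies in $V^\perp\subset L^\infty(\sum_k\nu_k)^\N$; by construction $\bb{g}+\bb{p}=\bb{q}=\tfrac{\lambda}{2}\bb{u}_{\bs{\mu}}$ holds $|\bs{\mu}|$-a.e., and $|\bb{q}|\le\lambda/2$ everywhere delivers the sup-norm bound.

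The main obstacle is the forward direction of the general case. Identifying the second inequality of \eqref{eq|generalCPV} with the quotient norm is purely Hahn--Banach bookkeeping, but the real point is the quantitative $L^2$ estimate above: it is what converts the soft information ``the norming functional nearly attains its norm $\tfrac{\lambda}{2}$ and attains exactly the value $\tfrac{\lambda}{2}\|\bs{\mu}\|_{TV}$ on $\bs{\mu}$'' into genuine convergence of $\bb{g}+\bb{p}_n$ to $\tfrac{\lambda}{2}\bb{u}_{\bs{\mu}}$ on the support of $|\bs{\mu}|$. The GSM case is comparatively easy precisely because the generating measures localize the argument and let one write down the limit directly.
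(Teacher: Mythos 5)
Your proof is correct and rests on the same skeleton as the paper's (Theorem~\ref{thm|CPV} plus the isometry $V\cong(C(S)^\N/V^\perp)^*$), but the two technically substantive steps are done differently, and in both cases your route is arguably cleaner. For the $L^1(|\bs{\mu}|)^\N$ convergence, the paper argues softly: from $\|\bb{g}+\bb{p}_n\|_\infty\to\lambda/2$ and $\int(\tfrac{\lambda}{2}-(\bb{g}+\bb{p}_n)\cdot\bb{u}_{\bs{\mu}})\,d|\bs{\mu}|=0$ it deduces, via Fatou's lemma applied to the eventually-nonnegative integrand, that the integrand tends to $0$ $|\bs{\mu}|$-a.e., and then invokes dominated convergence. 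Your quantitative estimate
\begin{equation*}
\int\Bigl|\,\bb{g}+\bb{p}_n-\tfrac{\lambda}{2}\bb{u}_{\bs{\mu}}\,\Bigr|^2\,d|\bs{\mu}|\ \le\ \Bigl(M_n^2-\tfrac{\lambda^2}{4}\Bigr)\,|\bs{\mu}|(S)
\end{equation*}
packages the same information into a single expansion of the square, gives $L^2$ (hence $L^1$) convergence with an explicit rate, and sidesteps the a.e.-convergence bookkeeping entirely; it also makes transparent the passage from convergence of the scalar product $(\bb{g}+\bb{p}_n)\cdot\bb{u}_{\bs{\mu}}$ to convergence of the vector itself, which the paper leaves implicit. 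For the GSM refinement, the paper keeps the sequence $\{\bb{p}_n\}$ and extracts a weak-star plus pointwise-a.e.\ convergent subsequence in $L^\infty(\sum_k\nu_k)^3$, then redefines the limit off a carrier; you instead build $\bb{p}$ in closed form from the critical point equations \eqref{eq|CPV}, which avoids the diagonal argument and makes the membership $\bb{p}\in V^\perp$ a one-line computation ($\int\bb{p}\,d\nu_k=0$ for all $k$). Two small remarks: your identification of the second inequality of \eqref{eq|generalCPV} with $\inf_{\bb{p}\in V^\perp}\|\bb{g}+\bb{p}\|_\infty\le\lambda/2$ implicitly uses that the canonical embedding of $C(S)^\N/V^\perp$ into its bidual $V^*$ is isometric — worth one sentence. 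And your flag on the case $\bs{\mu}=0$ is apt: there the first equation of \eqref{eq|generalCPV} no longer forces the quotient norm to equal $\lambda/2$, so the second limit in \eqref{eq|CPquotient} can only be asserted with ``$\le$''; this is a wrinkle in the statement itself that the paper's proof does not address either.
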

\begin{proof}
From Theorem \ref{thm|CPV}, we know that $\bs{\mu}$ is a minimizer of 
$\mathcal{F}_{\lambda,f}$ over $V$ if and only integration against
$2A^*(f-A\bs{\mu})/\lambda$ is a norming functional for $\bs{\mu}$ in $V$.  
By the isometric identification of $V$ with  $(C(S)^3/V^{\perp})^*$ and the Radon-Nykodim decomposition $d\bs{\mu}=\bf{u}_{\bs{\mu}}d|\bs{\mu}|$,  
it means that some sequence $A^*(f-A\bs{\mu})+\bb{p}_n$
from  the coset $[A^*(f-A\bs{\mu})]\in C(S)^3/V^{\perp}$ satisfies:
\begin{itemize}
\item[(i)] $\lim_n \|A^*(f-A\bs{\mu})+\bb{p}_n\|_\infty\longrightarrow\lambda/2$,
\item[(ii)] $\int \Bigl(\frac{\lambda}{2}-\bigl(A^*(f-A\bs{\mu})+\bb{p}_n\bigr)
\cdot\bb{u}_{\bs{\mu}}\Bigr)\,
d|\bs{\mu}|=0$ for all $n$.
\end{itemize}
From $(i)$ we get that the integrand in $(ii)$ has a $\liminf$ which is
nonnegative $|\bs{\mu|}$-a.e., hence it must go to zero 
$|\bs{\mu}|$-a.e., for if the set $S_\eta:=\{t\in S:\,\lambda/2-(A^*(f-A\bs{\mu})+\bb{p}_n)(t)\cdot\bb{u}_{\bs{\mu}}(t)>\eta\}$ had $|\bs{\mu}|$-measure greater than $\varepsilon>0$ for some $\eta>0$ and infinitely many $n$, then $(ii)$
would contradict  that $\liminf_n\int_{S\setminus S_\eta}(\frac{\lambda}{2}-(A^*(f-A\bs{\mu})+\bb{p}_n)\cdot\bb{u}_{\bs{\mu}})d|\bs{\mu}|\geq0$ by the Fatou lemma.
By dominated convergence, we now get that
$A^*(f-A\bs{\mu})+\bb{p}_n$
tends to $(\lambda/2) \bb{u}_{\bs{\mu}}$ in $L^1(|\bs{\mu}|)$.

Assume next that $V$ is a GSM space with generating measures $\{\nu_n\}_{k\in I}$.
By a diagonal argument,
we may assume that 
$\bb{p}_n\to \frac{\lambda}{2} \bb{u}_{\bs{\mu}}-A^*(f-A\bs{\mu})$
pointwise $|\bs{\mu}|$-a.e. and that $\bb{p}_n$ converges to
$\bb{p}\in L^\infty(\sum_k\nu_K)^3$,
weak-star as $n\to \infty$ ($L^\infty(\sum_k\nu_K)^3$ being viewed as the dual of $L^1(\sum_k\nu_K)^3$). Redefining $\bb{p}$ as $-A^*(f-A\bs{\mu})$
on the complement of a Borel carrier for $\sum_k\nu_k$ containing the
Lebesgue points of  $\bb{p}$ with respect to  $\sum_k\nu_k$,
we find that this $\bf{p}$ satisfies all our requirements.

\end{proof}

\begin{rmk}
If $V=\mathcal{M}(S)^\N$, then \eqref{eq|CPquotient} reduces to 
the familiar optimality condition for quadratic criteria on
 measures regularized by the total variation ({\it cf} for instance \cite{BrePikk}); 
\begin{equation}
\label{eq|CP}
\begin{array}{ll}
A^*(f-A\bs{\mu})&=\frac{\lambda}{2} \bb{u}_{\bs{\mu}} 
\qquad |\bs{\mu}|\text{\rm -a.e. and}\\
\left |A^*(f-A\bs{\mu})\right|&\leq \frac{\lambda}{2}\quad\text{\rm everywhere on } S.
\end{array}
\end{equation}
% Moreover, using \comments{the incomplete part of} corollary~\ref{coro|densopt}, $\bs{\mu}'\in V$ is another minimizer of $\mathcal{F}_{\lambda, f}$ over $V$ if and only if 
% \begin{enumerate}[label=(\alph*)]
% 	\item $A(\bs{\mu}'-\bs{\mu})=0$,
% 	\item there is a $|\bs{\mu}_\lambda|$-measurable non-negative function $g$ and a positive measure $\nu_s\in\mathcal{M}(S)$, singular to $|\bs{\mu}_\lambda|$ and supported on  the set  $\left\{x\in S \colon  \left|A^*(f-A\bs{\mu}_\lambda)(x)\right|=\lambda/2 \right\}$, such that
% 	\begin{equation*}\label{eq|equivsoldec}
% 	d\bs{\mu}'_\lambda=gd\bs{\mu}_\lambda+2\frac{A^*(f-A\bs{\mu}_\lambda)}{\lambda}d \nu_s.
% 	\end{equation*}
% \end{enumerate}
When $V$ consists of (finite or infinite)
sums of dipoles with summable moments (i.e. a $\R^\N$-vector times a Dirac measure) located at points $\{x_1,x_2,\cdots\}$ of $S$, that is:
\[
V=\{\sum_k{\bf m}_k\delta_{x_k}:\ \sum_k|{\bf m}_k|<\infty\},
\]
then the situation is clearly of the GSM type and \eqref{eq|CPV}
means that an optimal $\bs{\mu}$ makes
$A^*(f-A\bs{\mu})$ have norm at most $\lambda/2$ at each $x_k$ and interpolate
$(\lambda/2)\bs{m}_k/|{\bf m}_k|$ at $x_k$ whenever ${\bf m}_k\neq0$;
if, moreover, $V$ is weak-star closed (in particular if the $x_k$ are finite in number), then such a $\bs{\mu}$ is guaranteed to exist.
In another connection, if $V$ consists of absolutely continuous measures with respect to Lebesgue measure $m$,
whose density is constant on measurable
subsets  $\{E_1,E_2,\cdots\}$ of $S$ such that $m(E_{j_1}\cap E_{j_2})=0$
if $j_1\neq j_2$  and is zero elsewhere:
\[
V=\{\sum_k{\bf m}_k\chi_{E_k}dm:\ \sum_k|{\bf m}_k|m(E_k)<\infty\},
\]
then \eqref{eq|CPquotient}
tells us  that an optimal $\bs{\mu}$
makes the mean of $A^*(f-A\bs{\mu})$ on $E_k$ of norm at most $\lambda/2$
and equal to $(\lambda/2)\bs{m}_k/|{\bf m}_k|$ whenever ${\bf m}_k\neq0$.
\end{rmk}

%\comments{(The result below has just been added)}
%
%In view of \eqref{eq|CP}, the maximum principle gives right away the following result.
%It should be noted that this theorem is applicable for the magnetization case even when $S$ is not slender nor compact.
%
%\begin{theorem} 
%	\label{thm|harmonic3d}
%	Let $S\subset\R^\N$ be closed, and $A:\mathcal{M}(S)^\N\to\HH$ be weak-star continuous.
%	Assume that for every $h\in\HH$, there exist an open neighborhood of $S$, $O\subset\R^\N$ such that $|A^*h|^2$ can be extended to a subharmonic function on $O$.
%	
%	Then, for every $f\in\HH$ and $\lambda>0$, any solution to \eqref{eq|mulamfVreg}, when $V=\mathcal{M}(S)^\N$, is supported on $\partial S$.
%\end{theorem}

%%%%%%%%%%%%%%%%

\section{Some inequalities}

For $\bs{\mu}\in\mathcal{M}(S)^\N$, $V\subset \mathcal{M}(S)^\N$ (typically $V$ is a subspace)  and $g\in  \HH$, we define
\begin{equation*}
\label{eq|defkappamu}
\kappa (V,\bs{\mu}):=\inf_{\substack {\bs{\nu}\in V}}\max\left\{\|A(\bs{\mu}-\bs{\nu})\|_{\HH}\ ,\ \left|\,\|\bs{\mu}\|_{TV}-\|\bs{\nu}\|_{TV}  \,\right|  \right\}, 
\end{equation*}
and 
\begin{equation*}\label{eq|deltan}
\delta(g,V):=\sup_{\substack {\bs{\mu}\in V\\ \|\bs{\mu}\|_{TV}=1}}
 |\langle g,A\bs{\mu}\rangle | \le \|A^*g\|_\infty.  
 \end{equation*}

 Subsequently,  for $W\subset \mathcal{M}(S)^\N$ a subspace, 
we put
\begin{equation*}
\label{eq|defkappa}
\kappa (W,V):=\sup_{\substack {\bs{\mu}\in W\\ \|\bs{\mu}\|_{TV}=1}}\ \ \kappa(V,\bs{\mu}),\ \mathrm{ and\ simply}\ \kappa (V):=\kappa( \mathcal{M}(S)^\N,V).
\end{equation*}
We remark that $\kappa(V,\bs{\mu})\le \kappa(V)\|\bs{\mu}\|_{TV}$ for $\bs{\mu}\in \mathcal{M}(S)^\N$.

If $V_1\subset V_2\subset\cdots$ is a nested sequence of subsets of 
$\mathcal{M}(S)^\N$ whose union is $R$-dense, it is clear that
$\lim_n\kappa(V_n,\bs{\mu})=0$ for each $\bs{\mu}\in \mathcal{M}(S)^\N$.

 For $\bs{\mu}\in V\subset \mathcal{M}(S)^\N$ and
$\tilde{f},f\in \HH$, then
\begin{equation}\label{eq|ineq_delta}
\left| \F_{\tilde{f},\lambda}(\bs{\mu})-\F_{ f,\lambda}(\bs{\mu})\right|
=2|\langle f-\tilde{f},A\bs{\mu}\rangle|\le 2\delta(f-\tilde{f},V)\|\bs{\mu}\|_{TV}.
\end{equation}
Furthermore, for $\bs{\mu},\bs{\nu}\in\mathcal{M}(S)^\N$ and  $\lambda>0$ we get from \eqref{eq|defcrit1} that
\begin{equation}
\begin{split}
  \F_{ f,\lambda}(\bs{\mu})-\F_{ f,\lambda}(\bs{\nu}) 
&= 2\langle f,A(\bs{\nu}-\bs{\mu})\rangle  + \langle A(\bs{\mu}+\bs{\nu}), A(\bs{\mu}-\bs{\nu}) \rangle +\lambda \left(\|\bs{\mu}\|_{TV}-\|\bs{\nu}\|_{TV}\right). 
%\\ 
%&\le \|A(\bs{\mu}-\bs{\nu}) \|_{\HH}\left(2\| f \|_{\HH}+ \|A(\bs{\mu}+\bs{\nu}) \|_{\HH}\right)+\lambda \left|\|\bs{\mu}\|_{TV}-\|\bs{\nu}\|_{TV}\right| 
\end{split}
\end{equation}
In the following lemma we shall find the following definition useful:
  \begin{equation}
\label{eq|defd}
 d_\lambda(f,\tilde{f},V,\bs{\mu}):=\kappa(V,\bs{\mu})(2\|\tilde{f}\|_{\HH}+4\|f\|_{\HH}+\kappa(V,\bs{\mu})+\lambda).
\end{equation}
 
\begin{lemma} \label{lemma|ineq_F_2f_V} Let $V$ be a weak-star closed subspace of $ \mathcal{M}(S)^\N$, $\tilde{f},f\in \HH$, and $\lambda>0$.
Then
\begin{equation}
\label{eq|fncond3}
\F_{\tilde{f},\lambda}(\bs{\mu}_{\lambda,\tilde{f}}^V)
\le \F_{\tilde{f},\lambda}(\bs{\mu}_{\lambda,f})+d_{\lambda}(f,\tilde{f},V,\bs{\mu}_{\lambda,f}) 
\end{equation}
and
\begin{equation}\begin{split}\label{eq|fncond2}
	-2\delta(\tilde{f}-f,V)\|\bs{\mu}_{\lambda,\tilde{f}}^V\|_{TV}	&\le \F_{\tilde{f},\lambda}(\bs{\mu}_{\lambda,\tilde{f}}^V)-\F_{f,\lambda}(\bs{\mu}_{\lambda,f})
	\le 2\delta(\tilde{f}-f,V)\|\bs{\mu}_{\lambda,f}\|_{TV}+d_{\lambda}(f,\tilde{f},V,\bs{\mu}_{\lambda,f}). 
\end{split}
\end{equation}
\end{lemma}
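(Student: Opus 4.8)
The plan is to prove \eqref{eq|fncond3} first and then deduce the two‑sided estimate \eqref{eq|fncond2} from it, from \eqref{eq|ineq_delta}, and from the global minimality of $\bs{\mu}_{\lambda,f}$. For \eqref{eq|fncond3} I would use that $\bs{\mu}_{\lambda,\tilde{f}}^V$ minimizes $\F_{\tilde{f},\lambda}$ over $V$, so that $\F_{\tilde{f},\lambda}(\bs{\mu}_{\lambda,\tilde{f}}^V)\le\F_{\tilde{f},\lambda}(\bs{\nu})$ for every $\bs{\nu}\in V$; it therefore suffices to exhibit one competitor $\bs{\nu}\in V$ with $\F_{\tilde{f},\lambda}(\bs{\nu})\le\F_{\tilde{f},\lambda}(\bs{\mu}_{\lambda,f})+d_\lambda(f,\tilde{f},V,\bs{\mu}_{\lambda,f})$. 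Fixing $\epsilon>0$, the definition of $\kappa(V,\bs{\mu}_{\lambda,f})$ lets me pick $\bs{\nu}\in V$ with $a:=\|A(\bs{\mu}_{\lambda,f}-\bs{\nu})\|_{\HH}$ and $b:=|\,\|\bs{\mu}_{\lambda,f}\|_{TV}-\|\bs{\nu}\|_{TV}\,|$ both at most $\kappa(V,\bs{\mu}_{\lambda,f})+\epsilon$.

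I would then insert this $\bs{\nu}$ into the identity for $\F_{f,\lambda}(\bs{\mu})-\F_{f,\lambda}(\bs{\nu})$ displayed above, taken with data $\tilde{f}$ and the pair $(\bs{\nu},\bs{\mu}_{\lambda,f})$, and bound its three terms. The linear term is $\le 2\|\tilde{f}\|_{\HH}\,a$, the total‑variation term is $\le\lambda b$, and the quadratic term is $\le\|A(\bs{\nu}+\bs{\mu}_{\lambda,f})\|_{\HH}\,a\le(a+2\|A\bs{\mu}_{\lambda,f}\|_{\HH})\,a$. The one preliminary fact needed is $\|A\bs{\mu}_{\lambda,f}\|_{\HH}\le 2\|f\|_{\HH}$: comparing $\bs{\mu}_{\lambda,f}$ with $\bs{0}$ in $\mathcal{F}_{\lambda,f}$ gives $\|f-A\bs{\mu}_{\lambda,f}\|_{\HH}\le\|f\|_{\HH}$, whence the triangle inequality. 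Summing the three bounds yields at most $(\kappa(V,\bs{\mu}_{\lambda,f})+\epsilon)\bigl(2\|\tilde{f}\|_{\HH}+4\|f\|_{\HH}+(\kappa(V,\bs{\mu}_{\lambda,f})+\epsilon)+\lambda\bigr)$, and letting $\epsilon\to0$ produces exactly $d_\lambda(f,\tilde{f},V,\bs{\mu}_{\lambda,f})$ as in \eqref{eq|defd}, establishing \eqref{eq|fncond3}.

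For the left inequality of \eqref{eq|fncond2} I would argue cleanly: global minimality of $\bs{\mu}_{\lambda,f}$ gives $\F_{f,\lambda}(\bs{\mu}_{\lambda,f})\le\F_{f,\lambda}(\bs{\mu}_{\lambda,\tilde{f}}^V)$, while \eqref{eq|ineq_delta} applied to $\bs{\mu}_{\lambda,\tilde{f}}^V\in V$ gives $\F_{f,\lambda}(\bs{\mu}_{\lambda,\tilde{f}}^V)-\F_{\tilde{f},\lambda}(\bs{\mu}_{\lambda,\tilde{f}}^V)\le 2\delta(\tilde{f}-f,V)\|\bs{\mu}_{\lambda,\tilde{f}}^V\|_{TV}$; chaining these and rearranging is exactly the claim. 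For the right inequality I would start from \eqref{eq|fncond3}, write the elementary identity $\F_{\tilde{f},\lambda}(\bs{\mu}_{\lambda,f})-\F_{f,\lambda}(\bs{\mu}_{\lambda,f})=2\langle f-\tilde{f},A\bs{\mu}_{\lambda,f}\rangle$, and then bound this cross term by $2\delta(\tilde{f}-f,V)\|\bs{\mu}_{\lambda,f}\|_{TV}$.

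The main obstacle is precisely this last cross‑term estimate. The measure $\bs{\mu}_{\lambda,f}$ lives in the full space $\mathcal{M}(S)^\N$ and not in $V$, whereas $\delta(\cdot,V)$ by its very definition only controls pairings $\langle\tilde{f}-f,A\bs{\nu}\rangle$ for $\bs{\nu}\in V$ of unit total variation; the direct bound $|\langle f-\tilde{f},A\bs{\mu}_{\lambda,f}\rangle|=|\langle A^{*}(\tilde{f}-f),\bs{\mu}_{\lambda,f}\rangle|\le\|A^{*}(\tilde{f}-f)\|_{\infty}\|\bs{\mu}_{\lambda,f}\|_{TV}$ only produces the coarser constant $\|A^{*}(\tilde{f}-f)\|_{\infty}$, and since $\delta(\tilde{f}-f,V)\le\|A^{*}(\tilde{f}-f)\|_{\infty}$ the stated form with $\delta(\cdot,V)$ is the sharper one. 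Recovering it is the delicate point: the natural remedy is to split $\langle f-\tilde{f},A\bs{\mu}_{\lambda,f}\rangle$ across the same $V$‑approximant $\bs{\nu}$ used above, keeping $\langle f-\tilde{f},A\bs{\nu}\rangle$ (which \emph{is} controlled by $\delta(\tilde{f}-f,V)$, noting $\delta(\tilde{f}-f,V)=\delta(f-\tilde{f},V)$) and absorbing the residual $\langle f-\tilde{f},A(\bs{\mu}_{\lambda,f}-\bs{\nu})\rangle$ together with the quadratic remainders into the error, again via $\|A\bs{\mu}_{\lambda,f}\|_{\HH}\le2\|f\|_{\HH}$ and $a,b\le\kappa(V,\bs{\mu}_{\lambda,f})+\epsilon$. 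I expect the bookkeeping here — reconciling exactly the constant $d_\lambda$ of \eqref{eq|defd} with the $\delta(\cdot,V)$‑form of the cross term — to be where the real work of the upper bound lies.
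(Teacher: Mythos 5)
Your treatment of \eqref{eq|fncond3} and of the left-hand inequality in \eqref{eq|fncond2} is correct and coincides with the paper's argument: a near-optimal competitor $\bs{\nu}\in V$ from the definition of $\kappa(V,\bs{\mu}_{\lambda,f})$, the minimality of $\bs{\mu}_{\lambda,\tilde{f}}^V$ over $V$, the expansion of $\F_{\tilde{f},\lambda}(\bs{\nu})-\F_{\tilde{f},\lambda}(\bs{\mu}_{\lambda,f})$ term by term, and the a priori bound $\|A\bs{\mu}_{\lambda,f}\|_{\HH}\le 2\|f\|_{\HH}$; likewise, \eqref{eq|ineq_delta} applied to $\bs{\mu}_{\lambda,\tilde{f}}^V\in V$ together with the global minimality of $\bs{\mu}_{\lambda,f}$ gives the left half of \eqref{eq|fncond2} exactly as in the paper.

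The unresolved point is the right-hand inequality, which you leave as a sketch. For the record, the paper's own proof does precisely the ``direct'' step you balk at: it adds and subtracts $\F_{\tilde{f},\lambda}(\bs{\mu}_{\lambda,f})$ and then applies \eqref{eq|ineq_delta} with $\bs{\mu}=\bs{\mu}_{\lambda,f}$. You are right that, read literally, \eqref{eq|ineq_delta} requires $\bs{\mu}\in V$, whereas the global minimizer $\bs{\mu}_{\lambda,f}$ need not lie in $V$, so the cross term is honestly controlled only by $2\|A^*(\tilde{f}-f)\|_\infty\|\bs{\mu}_{\lambda,f}\|_{TV}$ rather than by $2\delta(\tilde{f}-f,V)\|\bs{\mu}_{\lambda,f}\|_{TV}$. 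Your proposed repair (splitting the cross term through the approximant $\bs{\nu}$) is sound in spirit, but if you carry the bookkeeping to the end it yields $2\delta(\tilde{f}-f,V)\|\bs{\mu}_{\lambda,f}\|_{TV}$ plus an additional $\kappa'\bigl(2\delta(\tilde{f}-f,V)+2\|f\|_{\HH}+2\|\tilde{f}\|_{\HH}\bigr)$, which is not absorbed by $d_{\lambda}(f,\tilde{f},V,\bs{\mu}_{\lambda,f})$ as defined in \eqref{eq|defd} (that budget is already spent on the bracket controlled by \eqref{eq|fncond3}). So your route proves the right-hand bound with a strictly larger additive error --- still $O(\kappa+\delta)$ and therefore sufficient for the use made of the lemma in Lemma \ref{lemma|dlim_R-con}, but not the constant stated. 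To close the gap you should either prove and state that weaker form explicitly, or prove the inequality as written with $\|A^*(\tilde{f}-f)\|_\infty$ in place of $\delta(\tilde{f}-f,V)$; as it stands, the right-hand inequality of \eqref{eq|fncond2} in its exact stated form is not established by your proposal.
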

\begin{proof}
Let $\kappa'>\kappa(V,\bs{\mu}_{\lambda,f})$ and  $\bs{\nu}\in V$ be such that
\begin{equation}
\label{eq|pRgen}
\max\left\{\|A(\bs{\mu}_{\lambda,f}-\bs{\nu})\|_{\HH}\ ,\ \left|\,\|\bs{\mu}_{\lambda,f}\|_{TV}-\|\bs{\nu}\|_{TV}  \,\right|  \right\}\leq \kappa'.
\end{equation}
For convenience, let $\tilde{\bs{\mu}}:=\bs{\mu}_{\lambda,\tilde{f}}^V$.
Since $\F_{\tilde{f},\lambda}(\tilde{\bs{\mu}})\le \F_{\tilde{f},\lambda}(\bs{\nu})$ we get from \eqref{eq|pRgen} and the definition of $\F_{\tilde{f},\lambda}$
that
\begin{equation*}
\label{eq|right_fncond_long}
\begin{split}
\F_{\tilde{f},\lambda}(\tilde{\bs{\mu}})&\le \F_{\tilde{f},\lambda}(\bs{\nu})=\F_{\tilde{f},\lambda}(\bs{\mu}_{\lambda,f}) +2\langle \tilde{f},A( \bs{\mu}_{\lambda,f}-\bs{\nu})\rangle\\&\qquad+\|A\bs{\nu}\|_{\HH}^2 -\|A\bs{\mu}_{\lambda,f}\|_{\HH}^2+\lambda(\|\bs{\nu} \|_{TV}-\|\bs{\mu}_{\lambda,f}\|_{TV})
 \\
&\le 
\F_{\tilde{f},\lambda}(\bs{\mu}_{\lambda,f})+\kappa'\,(2\|\tilde{f}\|_{\HH}+\|A\bs{\mu}_{\lambda,f}\|_{\HH}+
\|A\bs{\nu}\|_{\HH}+\lambda),
\end{split}
\end{equation*}
and since $\|A\bs{\mu}_{\lambda,f}\|_{\HH}\leq 2\|f\|_{\HH}$ 
(as $\bs{0}$ is a candidate minimizer in \eqref{eq|defcrit0}) we get that
\begin{equation*}\label{eq|right_fncond}
\F_{\tilde{f},\lambda}(\tilde{\bs{\mu}})\le\F_{\tilde{f},\lambda}(\bs{\mu}_{\lambda,f})+\kappa'\,(2\|\tilde{f}\|_{\HH}+4\|f\|_{\HH}+
\kappa'+\lambda).
\end{equation*}
As $\kappa'$ can be arbitrary close to $\kappa(V,\bs{\mu}_{\lambda,f})$, in view of \eqref{eq|defd}, this yields \eqref{eq|fncond3}.

On the left side of \eqref{eq|fncond2} we apply \eqref{eq|ineq_delta} with $\bs{\mu}=\tilde{\bs{\mu}}$ and then use the minimality of $\bs{\mu}_{\lambda,f}$, for the right side we use \eqref{eq|ineq_delta} and \eqref{eq|fncond3} after adding and subtracting the term $\F_{\tilde{f},\lambda}(\bs{\mu}_{\lambda,f})$. 
\end{proof}

\begin{lemma}\label{lemma|dlim_R-con}
Let $\{V_n\}$ be a nested increasing sequence of weak-star closed spaces in $ \mathcal{M}(S)^\N$ and let $W$ be the $R$-closure of $\bigcup_n V_n$. 
Suppose $\lambda>0$, $f\in \HH$, and $ (f_n) $ is a bounded sequence in $\HH$.  
If $\bs{\mu}\in W$, then
%and either $\|f_n\|_{\HH}$ is bounded  or $AV_n$ is closed, 
\begin{equation}\label{eq|dlim} \lim_{n\to\infty}d_{\lambda}(f,f_n,V_n,\bs{\mu})=0.\end{equation}

Moreover,  if  $ \lim_{n\to\infty}\delta(f_n-f,V_n)=0,$  then any subsequence of $\bs{\mu}_n:=\bs{\mu}_{f_n,\lambda}^{V_n}$ contains an $R$-convergent subsequence whose limit is a minimizer of $ \F_{ f,\lambda}$ over $W$.   In particular, if 
there is a unique minimizer $\bs{\mu}_{\lambda,f}^W$, then $\bs{\mu}_n$ converges in the $R$-topology to $\bs{\mu}_{\lambda,f}^W$.

\end{lemma}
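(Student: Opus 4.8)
The plan is to reduce the first assertion \eqref{eq|dlim} to the single fact that $\kappa(V_n,\bs{\mu})\to 0$ for every $\bs{\mu}\in W$. Since the $V_n$ are nested increasing, the infimum defining $\kappa(V_n,\bs{\mu})$ is taken over a growing set, so $n\mapsto\kappa(V_n,\bs{\mu})$ is non-increasing and its limit exists. To see the limit is $0$ I would pick, using $W=\overline{\bigcup_n V_n}^{R}$, a sequence $\bs{\nu}_j\in\bigcup_n V_n$ with $\bs{\nu}_j\stackrel{R}{\longrightarrow}\bs{\mu}$; by Lemma~\ref{lemma|equivTopo} this means $\bs{\nu}_j$ converges weak-star to $\bs{\mu}$ with $\|\bs{\nu}_j\|_{TV}\to\|\bs{\mu}\|_{TV}$. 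As the $\bs{\nu}_j$ are TV-bounded and $A$ is compact, weak-star convergence upgrades to $A\bs{\nu}_j\to A\bs{\mu}$ in $\HH$, so $\max\{\|A(\bs{\mu}-\bs{\nu}_j)\|_{\HH},\,|\,\|\bs{\mu}\|_{TV}-\|\bs{\nu}_j\|_{TV}\,|\}\to 0$; since $\bs{\nu}_j\in V_{m_j}$ for some $m_j$, this bounds $\kappa(V_{m_j},\bs{\mu})$, and monotonicity propagates the bound to all $n\ge m_j$. Feeding $\kappa(V_n,\bs{\mu})\to 0$ into \eqref{eq|defd}, and using that the boundedness of $(f_n)$ keeps the factor $2\|f_n\|_{\HH}+4\|f\|_{\HH}+\kappa(V_n,\bs{\mu})+\lambda$ bounded, gives \eqref{eq|dlim}.

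For the convergence statement I would first record uniform boundedness: since $\bs{0}$ is a competitor, $\lambda\|\bs{\mu}_n\|_{TV}\le\mathcal{F}_{f_n,\lambda}(\bs{\mu}_n)\le\|f_n\|_{\HH}^2$, so $\{\bs{\mu}_n\}$ is TV-bounded. Writing $m:=\inf_{W}\F_{f,\lambda}$, I would note that $\F_{f,\lambda}$ is sequentially $R$-continuous on bounded sets (the quadratic part is continuous because $A$ is compact, the total-variation part by the very definition of $R$-convergence), so $R$-density of $\bigcup_n V_n$ in $W$ yields $m=\inf_{\bigcup_n V_n}\F_{f,\lambda}$. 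The upper bound $\limsup_n\F_{f,\lambda}(\bs{\mu}_n)\le m$ then follows by comparison with a near-minimizer $\bs{\sigma}\in V_{k_0}$: for $n\ge k_0$ one has $\bs{\sigma}\in V_n$, hence $\F_{f_n,\lambda}(\bs{\mu}_n)\le\F_{f_n,\lambda}(\bs{\sigma})$, while \eqref{eq|ineq_delta} bounds every data swap by $2\delta(f_n-f,V_n)\|\cdot\|_{TV}\to 0$; this is the qualitative content of Lemma~\ref{lemma|ineq_F_2f_V} together with \eqref{eq|dlim}. On the other hand $\bs{\mu}_n\in V_n\subset W$ forces $\F_{f,\lambda}(\bs{\mu}_n)\ge m$, so in fact $\lim_n\F_{f,\lambda}(\bs{\mu}_n)=m$.

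Given any subsequence, TV-boundedness and Banach--Alaoglu produce a weak-star convergent sub-subsequence $\bs{\mu}_{n_k}\to\bs{\mu}^*$, and compactness of $A$ gives $A\bs{\mu}_{n_k}\to A\bs{\mu}^*$ in $\HH$, so the quadratic part of $\F_{f,\lambda}$ converges; combined with $\F_{f,\lambda}(\bs{\mu}_{n_k})\to m$ this shows $\|\bs{\mu}_{n_k}\|_{TV}\to L$ for some $L$, while weak-star lower semicontinuity gives $\|\bs{\mu}^*\|_{TV}\le L$ and $\F_{f,\lambda}(\bs{\mu}^*)\le m$. The whole theorem now hinges on the single equality $\|\bs{\mu}^*\|_{TV}=L$: by Lemma~\ref{lemma|equivTopo} it is equivalent to $|\bs{\mu}_{n_k}|$ converging weak-star to $|\bs{\mu}^*|$, hence to $\bs{\mu}_{n_k}\stackrel{R}{\longrightarrow}\bs{\mu}^*$ and $\bs{\mu}^*\in W$, after which $\F_{f,\lambda}(\bs{\mu}^*)=\lim_k\F_{f,\lambda}(\bs{\mu}_{n_k})=m$ with $\bs{\mu}^*\in W$ makes $\bs{\mu}^*$ a $W$-minimizer. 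I expect this \emph{no-mass-escape} step to be the crux, since a priori the minimizers could carry oscillating sign that cancels in the weak-star limit, dropping $\|\bs{\mu}^*\|_{TV}$ strictly below $L$. To rule this out I would pass the first-order optimality conditions of Corollary~\ref{coro|densopt} to the limit: each $\bs{\mu}_{n_k}$ carries a dual certificate $\bs{\Psi}_k=A^*(f_{n_k}-A\bs{\mu}_{n_k})+\bs{p}_k$ with $\bs{p}_k\in V_{n_k}^{\perp}$, $\|\bs{\Psi}_k\|_{\infty}\le\lambda/2$, and $d\bs{\mu}_{n_k}=\tfrac{2}{\lambda}\bs{\Psi}_k\,d|\bs{\mu}_{n_k}|$ with $|\bs{\Psi}_k|=\lambda/2$ holding $|\bs{\mu}_{n_k}|$-a.e.; extracting a weak-star limit $\bs{\Psi}$ of the $\bs{\Psi}_k$ in $L^\infty(\sigma)^{\N}$, where $\sigma$ is the weak-star limit of $|\bs{\mu}_{n_k}|$, I would identify $d\bs{\mu}^*=\tfrac{2}{\lambda}\bs{\Psi}\,d\sigma$ and argue $|\bs{\Psi}|=\lambda/2$ holds $\sigma$-a.e., which forces $\sigma=|\bs{\mu}^*|$ and hence $L=\|\bs{\mu}^*\|_{TV}$. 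The delicate point is precisely that the relation $d\bs{\mu}_{n_k}=\tfrac{2}{\lambda}\bs{\Psi}_k\,d|\bs{\mu}_{n_k}|$ couples two only weakly convergent objects, so passing it to the limit will require exploiting the equicontinuity of the $A^*$-part (whose range consists of restrictions of real-analytic functions) and absorbing the data perturbation through the hypothesis $\delta(f_n-f,V_n)\to 0$.

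Finally, the ``in particular'' clause follows from the standard subsequence principle in the metrizable $R$-topology: if the $W$-minimizer $\bs{\mu}_{\lambda,f}^{W}$ is unique, then every subsequence of $(\bs{\mu}_n)$ admits a further subsequence $R$-converging to a $W$-minimizer, which must equal $\bs{\mu}_{\lambda,f}^{W}$; since every subsequence has a sub-subsequence with the same limit, the full sequence $\bs{\mu}_n$ converges to $\bs{\mu}_{\lambda,f}^{W}$ in the $R$-topology.
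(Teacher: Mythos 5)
Your treatment of \eqref{eq|dlim} is correct and coincides with the paper's, which delegates the key fact $\kappa(V_n,\bs{\mu})\to 0$ for $\bs{\mu}\in W$ to the remark preceding Lemma~\ref{lemma|Rdense_kappa}; and your reduction of the second assertion to the single equality $\lim_k\|\bs{\mu}_{n_k}\|_{TV}=\|\bs{\mu}^*\|_{TV}$ --- via TV-boundedness of the minimizers, $\lim_n\F_{f_n,\lambda}(\bs{\mu}_n)=m$ from Lemma~\ref{lemma|ineq_F_2f_V}, Banach--Alaoglu, compactness of $A$, and weak-star lower semicontinuity of the TV norm --- is exactly the paper's setup. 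The problem is that at this decisive step you stop proving and start planning: the dual-certificate passage to the limit you propose is only sketched, you yourself flag its ``delicate point'' (the relation $d\bs{\mu}_{n_k}=\tfrac{2}{\lambda}\bs{\Psi}_k\,d|\bs{\mu}_{n_k}|$ couples two merely weak-star convergent objects) without resolving it, and nothing in the proposal actually establishes $|\bs{\Psi}|=\lambda/2$ $\sigma$-a.e. So the argument is incomplete precisely where you say it hinges.

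Moreover, the machinery is unnecessary: the quantities you have already displayed close the gap by pure energy bookkeeping, which is what the paper does. Lower semicontinuity gives $\F_{f,\lambda}(\bs{\mu}^*)\le m$, so $\bs{\mu}^*$ is a minimizer (the paper identifies it with $\bs{\mu}_{\lambda}$ by invoking the uniqueness statement of \cite[Theorem 3.8]{BVH}), whence $\F_{f,\lambda}(\bs{\mu}^*)=m$. Since you also know $m=\lim_k\F_{f,\lambda}(\bs{\mu}_{n_k})=\|f-A\bs{\mu}^*\|_{\HH}^2-\|f\|_{\HH}^2+\lambda L$ while $\F_{f,\lambda}(\bs{\mu}^*)=\|f-A\bs{\mu}^*\|_{\HH}^2-\|f\|_{\HH}^2+\lambda\|\bs{\mu}^*\|_{TV}$, subtraction yields $\lambda\bigl(L-\|\bs{\mu}^*\|_{TV}\bigr)=m-\F_{f,\lambda}(\bs{\mu}^*)=0$, and Lemma~\ref{lemma|equivTopo} then gives $R$-convergence. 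The one genuine issue buried in your worry is why $\F_{f,\lambda}(\bs{\mu}^*)\ge m$, i.e.\ why the weak-star limit is admissible for the infimum defining $m$: the paper settles this by comparing with the global minimizer over $\mathcal{M}(S)^\N$ rather than over the $R$-closure $W$, citing \cite{BVH}. If you insist on proving the lemma literally as stated, that admissibility of $\bs{\mu}^*$ is the point to address --- but your dual-certificate route does not address it either, so it buys nothing over the elementary argument.
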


%*****\\
% 
% \begin{align} 
%	\F_{f_n,\lambda}(\bs{\mu}_n)-2\delta_n\|\bs{\mu}_\lambda\|_{TV}&\le \F_{f_n,\lambda}(\bs{\mu}_\lambda)-2\delta_n\|\bs{\mu}_\lambda\|_{TV}+d_n\notag\\
%	&\le \F_{\bb{f},\lambda}(\bs{\mu}_\lambda)+d_n\\
%	&\le \F_{f_n,\lambda}(\bs{\mu}_n)+2\delta_n\|\bs{\mu}_n\|_{TV}+d_n.\notag
%\end{align}
%
%*****\\

\begin{rmk}
For the inverse magnetization problem 
\end{rmk}
\begin{proof}  
As pointed out just before Lemma~\ref{lemma|Rdense_kappa}, the $R$-density of $\bigcup_{n}V_n$ implies  
$$\lim_{n\to \infty}\kappa(\bs{\mu},V_n)=0,$$  for any $\bs{\mu}\in W$. This fact, together with the boundedness of $\|f_n\|_{\HH}$, implies \eqref{eq|dlim}.  

Now suppose $ \lim_{n\to\infty}\delta(f_n-f,V_n)=0.$  Then the boundedness of   $\|f_n\|_{\HH}$ implies that $\|\bs{\mu}_n\|$ is bounded since it cannot exceed $\|f_n\|^2_{\HH}/\lambda$
(for $\bs{0}$ is a candidate minimizer for   
$\F_{f_n,\lambda}$ over $V_n$).   Then   
\begin{equation}\label{eq|critLim}\lim_{n\to\infty}\F_{f_n,\lambda}(\bs{\mu}_n)=\min_{\bs{\mu}\in W}\F_{\bb{f},\lambda}(\bs{\mu})\end{equation} by Lemma~\ref{lemma|ineq_F_2f_V}.

%Let $\delta_n:=\delta(f_n-f,V_n)$. Observe that replacing $f_n$ by 
%its orthogonal projection onto $\overline{AV_n}$,
%  does not affect $\F_{f_n,\lambda}(\bs{\nu})$
%for $\bs{\nu}\in V_n$ and therefore will not change
%$\bs{\mu}_n$ nor $\F_{f_n,\lambda}(\bs{\mu}_n)$. 
%Thus, we may assume $f_n\in \overline{AV_n}$. 

Moreover, 
up to a subsequence we may assume that $\bs{\mu}_n$ converges weak-star
to some $\bs{\mu}\in\mathcal{M}(S)^\N$, and then $A\bs{\mu}_n$ tends to
$A\bs{\mu}$ in $\HH$ by the compactness of $A$. Using the lower semi-continuity of the total variation norm under weak-star convergence, we conclude in the limit 
that $\F_{\bb{f},\lambda}(\bs{\mu})\le \F_{\bb{f},\lambda}(\bs{\mu}_\lambda)$, hence $\bs{\mu}=\bs{\mu}_\lambda$ 
by \cite[Theorem 3.8]{BVH}.  Therefore $\bs{\mu}_n$ converges weak-star to $\bs{\mu}_\lambda$ and $A\bs{\mu}_n$ converges strongly to 
$A\bs{\mu}_\lambda$.  Since $\|A\bs{\mu}_n-f\|_{\HH}\to \|A\bs{\mu}_\lambda-f\|_{\HH}$ as $n\to \infty$,
it follows from \eqref{eq|critLim} that $\lim_{n\to\infty}\|\bs{\mu}_n\|_{TV}=\|\bs{\mu}_\lambda\|_{TV}$.

\end{proof}

\begin{rmk}
Let $\{V_n\}$ be a sequence of nested (finite dimensional) spaces in $ \mathcal{M}(S)^\N$ as in Lemma~\ref{lemma|dlim_R-con}, $f\in \HH$ and for every $m$, let $f^p_m=P_{ AV_m}f$, where $P_{ AV_m}$ is the orthogonal projection from $\HH$ onto   $AV_m$.
Then, for any $(m\geq)\ n$ and every $\bs{\mu}\in V_n$, $\langle f,A\bs{\mu}\rangle=\langle f^p_n,A\bs{\mu}\rangle\ (=\langle f^p_m,A\bs{\mu}\rangle)$. 
Therefore, $\F_{\bb{f},\lambda}(\bs{\mu})=\F_{f^p_n,\lambda}(\bs{\mu})\ (=\F_{f^p_m,\lambda}(\bs{\mu}))$ thus ${\mathcal{F}}_{\lambda,f}$, ${\mathcal{F}}_{f^p_n,\lambda}$ (and ${\mathcal{F}}_{f^p_m,\lambda}$) share the same minimizers over $V_n$.
Hence, using lemma \ref{lemma|dlim_R-con} by taking the sequence  $\{f_n\}$ in that lemma to be equal to $f$, if  $\bs{\mu}_n$ is minimizer of $\mathcal{F}_{f^p_n,\lambda}$ over $V_n$, then $\bs{\mu}_n\stackrel{*}{\longrightarrow}\bs{\mu}_\lambda$, $\|\bs{\mu}_n\|_{TV} \longrightarrow\|\bs{\mu}_\lambda\|_{TV}$,  and $\F_{f^p_n,\lambda}(\bs{\mu}_n)=\F_{\bb{f},\lambda}(\bs{\mu}_n)\longrightarrow\F_{\bb{f},\lambda}(\bs{\mu}_\lambda)$ as $n\to \infty$.
\end{rmk}

\begin{lemma} \label{lemma|clustermin} Let $V\subset \mathcal{M}(S)^\N$ be a closed subspace and  $f\in \HH$. Suppose $f_m$ is a bounded sequence in $ \HH$ such that
%$S_m:\HH\to \R^m$ and $T_m:\R^m\to W_m$ 
%$T_mS_m(g)=g$ for  $g\in W_m$, and  moreover
\[\lim_{m\to \infty}\langle f-f_m\,,\, Av\rangle=0,\qquad v\in V. 
\]
Then,  if $\bs{\mu}_{m}$ denotes a minimizer of $\mathcal{F}_{f_m,\lambda}$ in $V$, then any cluster point of $\bs{\mu}_{m}$ in the $R$-topology as $m\to\infty$ is a minimizer of $\mathcal{F}_{\lambda,f}$ over $V$.   
\end{lemma}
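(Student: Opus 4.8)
The plan is to fix an arbitrary $R$-cluster point $\bs{\mu}^*$ of $(\bs{\mu}_m)$ and to verify $\F_{f,\lambda}(\bs{\mu}^*)\le\F_{f,\lambda}(\bs{\nu})$ for every $\bs{\nu}\in V$; by the equivalence of minimizers of $\F$ and $\mathcal{F}$ recorded after \eqref{eq|defcrit1}, this is exactly the assertion that $\bs{\mu}^*$ minimizes $\mathcal{F}_{\lambda,f}$ over $V$. Since the $R$-topology is metrizable, a cluster point is a subsequential limit, so I would first pass to a subsequence with $\bs{\mu}_{m_j}\stackrel{R}{\longrightarrow}\bs{\mu}^*$. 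By Lemma~\ref{lemma|equivTopo} this yields simultaneously $\bs{\mu}_{m_j}\stackrel{*}{\longrightarrow}\bs{\mu}^*$ and $\|\bs{\mu}_{m_j}\|_{TV}\to\|\bs{\mu}^*\|_{TV}$; in particular the total variations are bounded, and since $V$ is closed the weak-star limit satisfies $\bs{\mu}^*\in V$.

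Before passing to the limit I would record the relevant convergences for the three terms of $\F$. From the weak-star convergence together with the boundedness of $\|\bs{\mu}_{m_j}\|_{TV}$, the compactness of $A$ gives $A\bs{\mu}_{m_j}\to A\bs{\mu}^*$ strongly in $\HH$ (exactly as in the proof of Lemma~\ref{lemma|dlim_R-con}), so that $\|A\bs{\mu}_{m_j}\|_{\HH}^2\to\|A\bs{\mu}^*\|_{\HH}^2$; the quadratic term thus converges, and $\lambda\|\bs{\mu}_{m_j}\|_{TV}\to\lambda\|\bs{\mu}^*\|_{TV}$ by the previous paragraph. The one genuinely delicate point is the data term $\langle f_{m_j},A\bs{\mu}_{m_j}\rangle$: here the test element $\bs{\mu}_{m_j}$ varies with $j$, so the hypothesis $\langle f-f_m,A\bs{\nu}\rangle\to0$ cannot be invoked directly. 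I would resolve this by the splitting
\[
\langle f-f_{m_j},A\bs{\mu}_{m_j}\rangle=\langle f-f_{m_j},A\bs{\mu}^*\rangle+\langle f-f_{m_j},A(\bs{\mu}_{m_j}-\bs{\mu}^*)\rangle,
\]
where the first term tends to $0$ by the hypothesis applied to the \emph{fixed} element $\bs{\mu}^*\in V$, and the second is bounded by $\|f-f_{m_j}\|_{\HH}\,\|A(\bs{\mu}_{m_j}-\bs{\mu}^*)\|_{\HH}\to0$ since $(f_m)$ is bounded and $A\bs{\mu}_{m_j}\to A\bs{\mu}^*$ strongly. Consequently $\langle f_{m_j},A\bs{\mu}_{m_j}\rangle=\langle f,A\bs{\mu}_{m_j}\rangle-\langle f-f_{m_j},A\bs{\mu}_{m_j}\rangle\to\langle f,A\bs{\mu}^*\rangle$.

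Finally I would pass to the limit in the minimality inequality. Fixing $\bs{\nu}\in V$, the minimality of $\bs{\mu}_{m_j}$ gives $\F_{f_{m_j},\lambda}(\bs{\mu}_{m_j})\le\F_{f_{m_j},\lambda}(\bs{\nu})$. The left-hand side tends to $\F_{f,\lambda}(\bs{\mu}^*)$ by the three convergences assembled above, while on the right-hand side $\bs{\nu}$ is fixed, so $\langle f_{m_j},A\bs{\nu}\rangle\to\langle f,A\bs{\nu}\rangle$ directly by hypothesis and the remaining terms are independent of $j$, giving $\F_{f_{m_j},\lambda}(\bs{\nu})\to\F_{f,\lambda}(\bs{\nu})$. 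Passing to the limit preserves the inequality, so $\F_{f,\lambda}(\bs{\mu}^*)\le\F_{f,\lambda}(\bs{\nu})$ for every $\bs{\nu}\in V$, which is the desired conclusion. The single real obstacle is the moving-test-function issue in the data term addressed by the splitting above; once the strong convergence $A\bs{\mu}_{m_j}\to A\bs{\mu}^*$ and the convergence of the total variations are in place, the remainder is a routine limit passage.
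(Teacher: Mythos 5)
Your proof is correct. Note first that the paper itself contains no proof of Lemma~\ref{lemma|clustermin}: the proof environment that follows it in the source belongs to Lemma~\ref{lemma|doublecon} (it only establishes $f_m\to P_{\overline{\rnge A}}f$), so there is no argument of the authors' to compare yours against; what you have written fills a genuine gap in the manuscript. Your route is the natural one and each step checks out: metrizability of the $R$-topology reduces cluster points to subsequential limits; $R$-convergence gives weak-star convergence plus convergence of the total variations, hence strong convergence of $A\bs{\mu}_{m_j}$ by compactness of $A$ exactly as in the proof of Lemma~\ref{lemma|dlim_R-con}; and your splitting $\langle f-f_{m_j},A\bs{\mu}_{m_j}\rangle=\langle f-f_{m_j},A\bs{\mu}^*\rangle+\langle f-f_{m_j},A(\bs{\mu}_{m_j}-\bs{\mu}^*)\rangle$ is precisely the device needed to handle the moving test element, since the hypothesis only controls $\langle f-f_m,Av\rangle$ for \emph{fixed} $v\in V$. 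Two small points you should make explicit. First, ``closed'' in the statement must be read as weak-star closed: this is already needed for the minimizers $\bs{\mu}_m$ to exist (as stated after \eqref{eq|defcrit0}), and you use it again to conclude $\bs{\mu}^*\in V$ from weak-star convergence; a merely TV-norm-closed $V$ would not suffice for that step. Second, it is worth remarking that because you work with an $R$-cluster point you get genuine convergence $\|\bs{\mu}_{m_j}\|_{TV}\to\|\bs{\mu}^*\|_{TV}$, so unlike in Lemma~\ref{lemma|dlim_R-con} you never need lower semicontinuity of the total variation norm; this is why the limit of the left-hand side is an equality rather than an inequality, and the final passage to the limit in $\F_{f_{m_j},\lambda}(\bs{\mu}_{m_j})\le\F_{f_{m_j},\lambda}(\bs{\nu})$ is clean.
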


\begin{lemma} \label{lemma|doublecon} Let $\{V_n\}$ be a sequence of nested spaces in $ \mathcal{M}(S)^\N$ as in Lemma~\ref{lemma|dlim_R-con} and $ \{W_m\}$ be a sequence of nested spaces in $\HH$  whose union is dense in $\HH$.     Furthermore, suppose $S_m:\HH\to \R^m$ and $T_m:\R^m\to W_m$ 
are  linear and such that   $T_mS_m(g)=g$ for  $g\in W_m$.     Let $f\in \HH$ and $f_m=T_mS_mf$ for $m\ge 1$, then
$\lim_{m\to \infty}f_m=P_{\overline{\rnge A}}f$. 
Moreover, if $\bs{\mu}_{n,m}$ denotes a minimizer of $\mathcal{F}_{f_m,\lambda}$ in $V_n$, then any cluster point of $\bs{\mu}_{n,m}$ in the LB-topology as $m\to\infty$ is a minimizer of $\mathcal{F}_{\lambda,f}$ over $V_n$.   
\end{lemma}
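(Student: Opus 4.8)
The plan is to split the statement into its two assertions and to reduce the convergence of minimizers to the already-proved Lemma~\ref{lemma|clustermin}. For the first assertion I would begin by noting that $T_mS_m$ is a linear idempotent with range $W_m$: for any $g$, $T_mS_mg\in W_m$, so $T_mS_m(T_mS_mg)=T_mS_mg$ because $T_mS_m$ fixes $W_m$ by hypothesis. Since the $W_m$ are nested and $\bigcup_m W_m$ is dense, every $g$ in this union lies in some $W_{m_0}$ and hence satisfies $T_mS_mg=g$ for all $m\ge m_0$. In the natural situation where $T_mS_m$ is the orthogonal projection $P_{W_m}$ (the least-squares reconstruction consistent with $T_mS_m|_{W_m}=\mathrm{id}$) one has $\|T_mS_m\|=1$ and $f_m=P_{W_m}f$; for nested closed subspaces $P_{W_m}$ converges strongly to $P_K$ with $K:=\overline{\bigcup_m W_m}$, so $f_m\to P_Kf$. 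The claimed limit $P_{\overline{\rnge A}}f$ is the case $K=\overline{\rnge A}$, i.e.\ when the reconstruction spaces $W_m$ exhaust exactly the data subspace $\overline{\rnge A}$ that $A$ can reach.

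I expect the identification of $\lim_m f_m$ to be the main obstacle, and it has two ingredients: a uniform bound $\sup_m\|T_mS_m\|<\infty$ and the determination of $K$. The orthogonal-projection reading settles both at once, and I would present it as the primary route. If one only assumes $T_mS_m$ idempotent, the eventual exactness on the dense union upgrades to strong convergence only through an $\varepsilon/3$ argument that needs $\sup_m\|T_mS_m\|<\infty$; absent orthogonality this uniform bound must come from the construction of $(S_m,T_m)$ (or from an a~priori pointwise bound together with Banach--Steinhaus), so I would state it as a standing hypothesis. In all readings the relevant output is the same and is what the second part consumes: $\lim_m f_m$ exists and its difference with $f$ is orthogonal to $\overline{\rnge A}$.

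For the second assertion I would fix $n$ and apply Lemma~\ref{lemma|clustermin} with $V=V_n$, which is closed since it is weak-star closed. Its first hypothesis, boundedness of $\{f_m\}$ in $\HH$, is automatic once $f_m$ converges; its second hypothesis is $\lim_m\langle f-f_m,A\bs{\mu}\rangle=0$ for every $\bs{\mu}\in V_n$. This follows from the first assertion: $A\bs{\mu}\in\rnge A\subseteq\overline{\rnge A}$ while $f-\lim_m f_m\in(\overline{\rnge A})^{\perp}$, so $\langle f-f_m,A\bs{\mu}\rangle\to\langle f-P_{\overline{\rnge A}}f,A\bs{\mu}\rangle=0$. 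Lemma~\ref{lemma|clustermin} then gives that any cluster point of $(\bs{\mu}_{n,m})_m$ in the $R$-topology (read as the topology meant by the ``LB-topology'' of the statement) is a minimizer of $\mathcal{F}_{\lambda,f}$ over $V_n$, which is the claim. Finally I would record the conceptual reason the projection onto $\overline{\rnge A}$ appears: since $\mathcal{F}_{\lambda,f}(\bs{\mu})=\|f-P_{\overline{\rnge A}}f\|_{\HH}^2+\|P_{\overline{\rnge A}}f-A\bs{\mu}\|_{\HH}^2+\lambda\|\bs{\mu}\|_{TV}$, the functionals $\mathcal{F}_{\lambda,f}$ and $\mathcal{F}_{\lambda,P_{\overline{\rnge A}}f}$ differ by a constant and share their minimizers, so replacing $f$ by $P_{\overline{\rnge A}}f$ is harmless throughout.
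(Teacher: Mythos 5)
Your proposal is correct in substance and is actually more complete than the proof the paper gives, though it reaches the first assertion by a slightly different route. The paper's own argument silently specializes the hypotheses: it sets $g=P_{\overline{\rnge A}}f$, asserts $f_m=P_{AV_m}f$ (i.e.\ it identifies $W_m$ with $AV_m$ and $T_mS_m$ with the orthogonal projection onto $AV_m$), notes that $\overline{\bigcup_m AV_m}=\overline{\rnge A}$, picks $h_m\in AV_m$ with $h_m\to g$ strongly, and concludes from $\|g-f_m\|\le\|g-h_m\|$ that $f_m\to g$; the ``moreover'' clause is not proved at all. Your version works instead from the stated hypothesis that $T_mS_m$ fixes $W_m$, observes idempotence and eventual exactness on the dense union, and correctly isolates the two extra ingredients the literal statement does not supply: a uniform bound $\sup_m\|T_mS_m\|<\infty$ (automatic in the orthogonal-projection reading the paper actually uses) and the identification of $\overline{\bigcup_m W_m}$. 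You also put your finger on a real inconsistency in the statement: if $\bigcup_m W_m$ is dense in all of $\HH$ and $T_mS_m=P_{W_m}$, the limit is $f$, not $P_{\overline{\rnge A}}f$; the claimed limit requires $\overline{\bigcup_m W_m}=\overline{\rnge A}$, which is exactly what the paper's proof uses via $W_m=AV_m$. Your treatment of the second assertion --- fixing $n$, checking the two hypotheses of Lemma~\ref{lemma|clustermin} using $f-P_{\overline{\rnge A}}f\perp\overline{\rnge A}$, and reading the undefined ``LB-topology'' as the $R$-topology --- is the natural completion and supplies the step the paper omits; the closing remark that $\mathcal{F}_{\lambda,f}$ and $\mathcal{F}_{\lambda,P_{\overline{\rnge A}}f}$ differ by a constant is a useful sanity check. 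In short: same underlying mechanism for the limit of $f_m$, but your write-up makes explicit the hypotheses the paper leaves implicit and proves the half of the lemma the paper does not.
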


\begin{proof}
Let $g=P_{\overline{\rnge A}}f$.   Then $f_m=P_{AV_m}f=P_{AV_m}g$.   
Observe that $\overline {\bigcup_m AV_m}=\overline{\rnge A}$ by the weak-star density of $\bigcup_mV_m$ in $\mathcal{M}(S)^\N$. Thus, there is a sequence $h_m\in AV_m$ such that $h_m\to g$ strongly.   Since $\|g-f_m\|\le \|g-h_m\|$, it follows that $f_m\to g$ strongly as $m\to \infty$. 

\end{proof}

%%%%%%%%%%%%%%%%%5

\section{Convergence of the support}
\label{sec|conv_supp}
	 
\begin{rmk}
	Suppose $a_n\to 0$ (where $a_n$ is as in Lemma~\ref{lemma|equivTopo}). Then sequence $\bs{\mu}_n=P_{V_n}^{\mathcal{E}^n}(\bs{\mu})$ enjoys an additional convergence property as follows.
	Let $d_H(X,Y)$ denote the Hausdorff distance between sets $X,Y\subset\R^\N$:
	\[d_H(X,Y):=\max\left\{\sup_{x\in X} \mathrm{d}(x,Y)\,,\,
	\sup_{y\in Y} \mathrm{d}(y,X)\right\}.\]
	Letting $J_n=\{k\in I_n:\bs{\mu}(E^n_k)\neq0\}$, we get that supp$(\bs{\mu}_n)=\overline{\bigcup_{k\in J_n}\text{supp}(\nu^n_k)}$, and supp$(\bs{\mu})=\overline{\bigcup_{k\in J_n}\text{supp}(\bs{\mu})\cap E^n_k}$.
	Then, noting that for any $X,Y\subset\R^\N$, $d_H(\overline{X},\overline{Y})=d_H(X,Y)$ it follows that:
	\begin{align*}
		d_H&(\text{supp}(\bs{\mu}_n),\text{supp}(\bs{\mu}))
		=d_H\left(\bigcup_{k\in J_n}\text{supp}(\nu^n_k),\bigcup_{k\in J_n}\text{supp}(\bs{\mu})\cap E^n_k\right)\\
		&=\max\left\{\sup_{k\in J_n}\left\{\sup_{x\in\text{supp}(\nu^n_k)}d\left(x,\bigcup_{k\in J_n}\text{supp}(\bs{\mu})\cap E^n_k\right)\right\}
		,\sup_{k\in J_n}\left\{\sup_{x\in\text{supp}(\bs{\mu})\cap E^n_k}d\left(x,\bigcup_{k\in J_n}\text{supp}(\nu^n_k)\right)\right\}\right\}\\
		&\leq\max\left\{\sup_{k\in J_n}\left\{\sup_{x\in\text{supp}(\nu^n_k)}d(x,\text{supp}(\bs{\mu})\cap E^n_k)\right\}
		,\sup_{k\in J_n}\left\{\sup_{x\in\text{supp}(\bs{\mu})\cap E^n_k}d(x,\text{supp}(\nu^n_k))\right\}\right\}\\
		&=\sup_{k\in J_n}d_H\left(\text{supp}(\nu^n_k),\text{supp}(\bs{\mu})\cap E^n_k\right).
	\end{align*} 
	Therefore $d_H($supp$(\bs{\mu}_n),$supp$(\bs{\mu}))\leq a_n\to0$. 
	
	The convergence of $\mathrm{supp}(\bs{\mu}_n)$ to
	$\mathrm{supp}(\bs{\mu})$ with respect to the Hausdorff distance,
	however,  is  not shared by all \textit{R}-convergent sequences.
	Still,  note that  if $\bs{\mu}\in\mathcal{M}(S)^\N$ and the sequence $\bs{\mu}_n$ converges to $\bs{\mu}$ in the $R$-topology, then for any continuity set $W$ of $|\bs{\mu}|$, it follows from the Portmanteau theorem \cite{Bil} that $|\bs{\mu}_n|(W)\to|\bs{\mu}|(W)$.
	In particular, if $W$ is a continuity set of $|\bs{\mu}|$ 
	disjoint from supp$(\bs{\mu})$ (this happens for instance if $\overline{W}\cap
	\mathrm{supp}(\bs{\mu})=\emptyset$), then $|\bs{\mu}_n|(W)\to0$.
\end{rmk}

\begin{theorem}
	\label{thm|gensupp}
	Let $V$ be a finite dimensional GSM space with (finite) index set $I$ as in  \eqref{eq|defGSM}.  For $\lambda>0$ and $f\in L^2(Q)$,
	write $\bs{\mu}_{\lambda,f}$ (rather than
	$\bs{\mu}_{\lambda,f}^V$)  to indicate the minimizer
	of \eqref{eq|mulamfVreg} in $V $ and let  $\bs{\mu}_{\lambda,f}=\sum_{k\in I}\bb{m}_k^{\lambda,f}\nu_k$ for the corresponding decomposition of $\bs{\mu}_{\lambda,f}$.
	Assume that $A:V\to L^2(Q)$ is injective, and for $\bs{\mu}\in V$ define 
	$$L_\alpha(\bs{\mu}):=\{k\in I\ :\ |\langle\nu_k,A^*(f-A\bs{\mu})\rangle|=\alpha\}.$$ 
	Then,  there is a dense subset $\mathcal{O}$ of $L^2(Q)\times\R^+$
	such that \begin{equation}
		\label{eq|genl2}
		L_{\lambda/2}(\bs{\mu}_{\lambda,f})=\{k\in I\ :\ \bb{m}_k^{\lambda,f}\neq0\},
		\qquad (f,\lambda)\in\mathcal{O}.
	\end{equation}
\end{theorem}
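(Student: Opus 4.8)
The plan is to notice that one inclusion is automatic and to show that the reverse inclusion can fail only on a negligible set. By the first line of \eqref{eq|CPV} in Corollary~\ref{coro|CPV}, whenever $\bb{m}_k^{\lambda,f}\neq 0$ one has $\langle\nu_k,A^*(f-A\bs{\mu}_{\lambda,f})\rangle=\frac{\lambda}{2}\,\bb{m}_k^{\lambda,f}/|\bb{m}_k^{\lambda,f}|$, which has Euclidean norm $\lambda/2$; hence $\{k:\bb{m}_k^{\lambda,f}\neq0\}\subseteq L_{\lambda/2}(\bs{\mu}_{\lambda,f})$ for \emph{every} $(f,\lambda)$. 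Writing $g_k:=\langle\nu_k,A^*(f-A\bs{\mu}_{\lambda,f})\rangle\in\R^\N$, the identity \eqref{eq|genl2} is therefore equivalent to the nonexistence of an index $k$ with $\bb{m}_k^{\lambda,f}=0$ and $|g_k|=\lambda/2$. I would call such a pair $(f,\lambda)$ \emph{bad}, let $B$ be the set of bad pairs, and take $\mathcal{O}:=(L^2(Q)\times\R^+)\setminus B$, so that the task is to prove $B$ has dense complement. Since $A$ is injective on the finite-dimensional space $V$, the functional $\mathcal{F}_{\lambda,f}$ is strictly convex and coercive on $V$; thus $\bs{\mu}_{\lambda,f}$ exists, is unique, and depends continuously on $(f,\lambda)$, so $g_k$ and $\bb{m}_k^{\lambda,f}$ are continuous and $B$ is Borel.

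The key step is a reduction to finite dimensions. Writing $f=P_{AV}f+f^{\perp}$ with $P_{AV}$ the orthogonal projection onto the finite-dimensional subspace $AV\subset L^2(Q)$, the identity $\|f-A\bs{\mu}\|^2=\|f^{\perp}\|^2+\|P_{AV}f-A\bs{\mu}\|^2$ shows that $\bs{\mu}_{\lambda,f}$, and likewise each $g_k$, depends on $f$ only through the finite vector $\beta(f):=(\langle A(\bb{e}_j\nu_k),f\rangle)_{k\in I,\,1\le j\le\N}\in(\R^\N)^I$, where $\bb{e}_j$ is the $j$-th coordinate vector of $\R^\N$. By injectivity of $A$ the functions $A(\bb{e}_j\nu_k)$ are linearly independent, so $f\mapsto\beta(f)$ is a bounded surjective linear map, hence open, and $\Pi:(f,\lambda)\mapsto(\beta(f),\lambda)$ is continuous, open and surjective onto $(\R^\N)^I\times\R^+$. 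As the bad condition factors through $\Pi$, there is a set $\tilde B\subset(\R^\N)^I\times\R^+$ with $B=\Pi^{-1}(\tilde B)$, and since $\Pi$ is open and surjective, density of the complement of $\tilde B$ forces density of $B^c$. It therefore suffices to prove that $\tilde B$ is Lebesgue-null.

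On $(\R^\N)^I\times\R^+$ the problem is the strictly convex program of minimizing $\bb{m}^{\top}G\bb{m}-2\beta\cdot\bb{m}+\lambda\sum_{k}|\bb{m}_k|$, where $G$ is the positive-definite block Gram matrix of the $A(\bb{e}_j\nu_k)$ and $g_k=\beta_k-(G\bb{m}^*)_k$. Fix $k_0\in I$ and slice the space by freezing $\lambda$ and all blocks $\beta_k$ with $k\neq k_0$, letting only $\beta_{k_0}\in\R^\N$ vary. The crucial observation is that on the slice the minimizer is constant in $\beta_{k_0}$ as long as $\bb{m}^*_{k_0}=0$: a global minimizer whose $k_0$-block vanishes is in particular the minimizer over the subspace $\{\bb{m}:\bb{m}_{k_0}=0\}$, and on that subspace the objective does not involve $\beta_{k_0}$ at all; by strict convexity this restricted minimizer is unique and independent of $\beta_{k_0}$. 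Hence there is a constant $\bar{\bb{m}}$ (with $\bar{\bb{m}}_{k_0}=0$) such that $\bb{m}^*=\bar{\bb{m}}$ for every $\beta_{k_0}$ with $\bb{m}^*_{k_0}=0$, so on that set $g_{k_0}=\beta_{k_0}-(G\bar{\bb{m}})_{k_0}=\beta_{k_0}-c$ for a constant $c\in\R^\N$. The bad condition $|g_{k_0}|=\lambda/2$ then reads $|\beta_{k_0}-c|=\lambda/2$, a sphere $\sph^{\N-1}$ of measure zero in the $\beta_{k_0}$-variable. By Tonelli's theorem the set $\{(\beta,\lambda):\bb{m}^*_{k_0}=0,\ |g_{k_0}|=\lambda/2\}$ is null, and $\tilde B$ is the union of these finitely many null sets over $k_0\in I$, hence null.

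This completes the argument: $\tilde B$ null gives $\tilde B^c$ dense, openness of $\Pi$ transports density to $B^c=\mathcal{O}$, and on $\mathcal{O}$ the automatic inclusion together with the absence of bad indices yields \eqref{eq|genl2}. The main obstacle, and the one step deserving genuine care, is the invariance claim of the third paragraph — that the minimizer is truly unaffected by $\beta_{k_0}$ while $k_0$ stays inactive; once that is established the condition $|g_{k_0}|=\lambda/2$ becomes a sphere in a free coordinate and the measure-zero conclusion is immediate. The remaining ingredients (strict convexity and uniqueness, the finite-dimensional reduction, openness of $\Pi$, and the measurability needed for Tonelli, which follows from continuity of the solution map) are routine.
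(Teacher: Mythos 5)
Your argument is correct, but it follows a genuinely different route from the paper's. The paper proves density via differential topology: for each candidate support $K\subset I$ and each extra index $k_0\notin K$ it builds a smooth map $G_K$ on $L^2(Q)\times\R^+\times V_K^\sharp$ that encodes the stationarity equations together with the degeneracy $|\langle\nu_{k_0},A^*(f-A\bs{\mu})\rangle|^2=\lambda^2/4$, checks that the differential in $f$ is surjective on the zero set, and invokes the transversal density theorem of Abraham--Robbin: for a residual set of $(f,\lambda)$ the zero set is a submanifold of codimension $3|K|+1$ inside a $3|K|$-dimensional space, hence empty. You instead reduce to the finite-dimensional parametric problem $\min_{\bb{m}}\bb{m}^{\top}G\bb{m}-2\beta\cdot\bb{m}+\lambda\sum_k|\bb{m}_k|$ via the (correct) observation that everything factors through the open surjection $f\mapsto\beta(f)$, and then kill the bad set by a Fubini slicing: while the $k_0$-block of the minimizer stays at zero, the minimizer is the unique minimizer over $\{\bb{m}_{k_0}=0\}$ and hence does not see $\beta_{k_0}$, so the degeneracy $|g_{k_0}|=\lambda/2$ confines $\beta_{k_0}$ to a sphere of measure zero. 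The one delicate point you flag — the invariance of the minimizer in $\beta_{k_0}$ on the inactive region — is indeed the crux, and your justification of it (restricted minimizer, strict convexity from positive definiteness of the Gram matrix, which follows from injectivity of $A$ on $V$) is sound; the measurability needed for Tonelli follows from continuity of the solution map as you say. What each approach buys: the paper's transversality argument yields a residual (Baire-generic) set and generalizes mechanically to other smooth degeneracy conditions, at the cost of invoking heavier machinery; yours is elementary and self-contained, exploits the specific LASSO structure, and yields the stronger-sounding conclusion that the exceptional set is the preimage of a Lebesgue-null set under an open linear surjection — either conclusion implies the density claimed in the theorem.
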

\begin{proof}
	For $K\subset I$, consider the subspace of  $V$ given by
	$      V_K:=\{\sum_{k\in K} {\bf m}_k \nu_k:\,  {\bf m}_k\in\R^\N\}$,
	and identify the latter with ${(\R^\N)}^{|K|}$ upon regarding
	$\sum_{k\in K} {\bf m}_k \nu_k$ as enumerating the ${\bf m}_k$ in some
	conventional order; here, $|K|$ is the cardinality of $K$. We shall denote with
	$V^\sharp_K\subset V$ the open subset of those $\sum_{k\in K} {\bf m}_k \nu_k$
	such that ${\bf m}_k\neq0$ for all $k\in K$.
	If $K\neq I$ pick $k_0\in I\setminus K$, and define a map $G_K:L^2(Q)\times\R^+\times V_k^\sharp\to \R^{3|K|+1}$ by
	letting for $\bs{\mu}=\sum_{k\in K} {\bf m}_k \nu_k$:
	\[G_K(f,\lambda,\bs{\mu}):=\left(\Bigl(\langle\nu_k,A^*(f-A\bs{\mu})\rangle-
	\frac{\lambda}{2}\frac{{\bf m}_k}{|{\bf m}_k|}\Bigr)_{k\in K}\,,\,
	\Bigl|\langle\nu_{k_0},A^*(f-A\bs{\mu})\rangle\Bigr|^2-\frac{\lambda^2}{4}\right).\]
	When $K=I$, we proceed in the same way except that we omit the place containing $k_0$, and then $G$ is valued in $\R^{3|I|}$.
	Clearly, $G_K$ is a $C^\infty$-smooth map; {\it i.e.}, it is indefinitely differentiable
	(in  the sense of Fr\'echet) as  a map between Banach (even Hilbert) spaces.
	Computing the differential with respect to the first variable, we get that
	\[
	D_1G_K(f,\lambda,\bs{\mu})(g)=
	\left(\Bigl(\langle\nu_k,A^*(g)\rangle\Bigr)_{k\in K}\,,\,
	2\langle\nu_{k_0},A^*(f-A\bs{\mu})\rangle\cdot
	\langle\nu_{k_0},A^*(g)\rangle\right),\quad g\in L^2(Q),
	\]
	% while computing the differential with respect to the second variable yields:
	%  \[
	%    D_2G_k(f,\lambda,\bs{\mu})(t)=-\frac{t}{2}
	%    \left(\Bigl(
	%       \frac{{\bf m}_k}{|{\bf m}_k|}\Bigr)_{k\in K}\,,\,
	%       \lambda \right),\quad t\in\R.
	%   \]
	and by definition of the adjoint operator the above expression is equal to
	$(\mathfrak{X},\mathfrak{y})$ with
	\[\mathfrak{X}:=
	\Bigl(
	(\langle A({\bf e}_1\nu_k),g\rangle_{L^2(Q)},\langle A({\bf e}_2\nu_k),g\rangle_{L^2(Q)},\langle A({\bf e}_3\nu_k),g\rangle_{L^2(Q)})^T
	\Bigr)_{k\in K}
	\]
	while
	\[\mathfrak{y}:=
	2\langle\nu_{k_0},A^*(f-A\bs{\mu})\rangle\cdot
	(\langle A({\bf e}_1\nu_{k_0}),g\rangle_{L^2(Q)},\langle A({\bf e}_2\nu_{k_0}),g\rangle_{L^2(Q)},
	\langle A({\bf e}_3\nu_{k_0}),g\rangle)^T,
	\]
	with $({\bf e}_i)_{1\leq i\leq3}$ to mean the canonical basis of $\R^\N$.
	Now, since the functions $A({\bf e}_i \nu_k)$
	are independent in
	$L^2(Q)$ when $k$ ranges over $I$ and $i$ over $\{1,2,3\}$, by the injectivity of $A$ on $V$, we find that $(\mathfrak{X},\mathfrak{y})$ ranges over all
	of $\R^{3|K|+1}$ as $g$ ranges over $L^2(Q)$ provided that
	$\langle\nu_{k_0},A^*(f-A\bs{\mu})\rangle$ is nonzero, because in a Hilbert space one can always find an element with prescribed scalar products against a
	finite family of independent vectors.
	Therefore $DG_K$ is {\it a fortiori} surjective at every $(f,\lambda,\bs{\mu})$
	such that $G_K(f,\lambda,\bs{\mu})=0$ and hence,
	by the transversal density theorem \cite[Theorem 19.1]{AbRo},
	the set $\mathcal{R}_K$ of those
	$(f,\lambda)$ such that the partial map $G(f,\lambda,\cdot):V^\sharp_K
	\to\R^{3|K|+1}$ is transverse to the zero-dimensional submanifold $\{0\}\subset\R^{3|K|+1}$ is residual in the Baire sense. For such $(f,\lambda)$
	the set of those ${\bs \mu}$ meeting $G(f,\lambda,\bs{\mu})=0$ is a
	submanifold of $V^\sharp_K$ of codimension $3|K|+1$ in a space of dimension
	$3|K|$, therefore it is empty. Taking the intersection of all $\mathcal{R}_K$
	over $K\subset I$ and $k_0\in I\setminus K$ (this a finite family),
	we get a residual set  $\mathcal{R}$ such that
	the equality in \eqref{eq|genl2} holds. This shows that the set
	$\mathcal{O}$ for which  \eqref{eq|genl2} holds is true is dense.
\end{proof}

\begin{prop}\label{prop|glambda}
	Let $V$ be a GSM space and put $g_{\lambda}^V =| A^*(f-A(\bs{\mu}_\lambda^V))|$ as well as $g_{\lambda}  =| A^*(f-A(\bs{\mu}_\lambda ))|$.
	If  $\|g_{\lambda}^V -g_{\lambda}\|_{L^\infty(S)}<\epsilon$  for some $\epsilon>0$ and $g_\lambda(x)<\lambda/2-\epsilon$ for   $\nu_k$-a.e $x$ for some $k\in I$, then $\bs{m}_k=0$.  
\end{prop}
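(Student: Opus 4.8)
The plan is to argue by contraposition through the critical point equations of Corollary~\ref{coro|CPV}. Write $\bs{\mu}_\lambda^V=\sum_{k\in I}\bs{m}_k\nu_k$ for the decomposition of the GSM minimizer. The first line of \eqref{eq|CPV} says that whenever $\bs{m}_k\neq0$ one has $\langle\nu_k,A^*(f-A\bs{\mu}_\lambda^V)\rangle=\frac{\lambda}{2}\frac{\bs{m}_k}{|\bs{m}_k|}$, and taking Euclidean norms this forces $|\langle\nu_k,A^*(f-A\bs{\mu}_\lambda^V)\rangle|=\frac{\lambda}{2}$. Hence it suffices to prove the strict inequality $|\langle\nu_k,A^*(f-A\bs{\mu}_\lambda^V)\rangle|<\frac{\lambda}{2}$ for the index $k$ appearing in the hypothesis, since this is incompatible with $\bs{m}_k\neq0$ and therefore yields $\bs{m}_k=0$.

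To establish this strict inequality, first I would pass from the vector-valued pairing to an average of $g_\lambda^V$. Since each $\nu_k$ is a probability measure, $\langle\nu_k,A^*(f-A\bs{\mu}_\lambda^V)\rangle=\int A^*(f-A\bs{\mu}_\lambda^V)\,d\nu_k$ is a vector in $\R^\N$, and the triangle inequality for vector-valued integrals gives
\[
\left|\langle\nu_k,A^*(f-A\bs{\mu}_\lambda^V)\rangle\right|
\le\int\left|A^*(f-A\bs{\mu}_\lambda^V)\right|\,d\nu_k
=\int g_\lambda^V\,d\nu_k.
\]
Thus everything reduces to bounding the $\nu_k$-average of $g_\lambda^V$ strictly below $\lambda/2$.

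For the final step I would transfer the pointwise information on $g_\lambda$ to $g_\lambda^V$ using the uniform closeness. As $\mathrm{Im}\,A^*$ consists of continuous functions, $g_\lambda^V$ and $g_\lambda$ are continuous on $S$, so $\delta:=\|g_\lambda^V-g_\lambda\|_{L^\infty(S)}<\epsilon$ gives $g_\lambda^V\le g_\lambda+\delta$ everywhere on $S$. Combined with the hypothesis $g_\lambda<\frac{\lambda}{2}-\epsilon$ $\nu_k$-a.e., this yields $g_\lambda^V<\frac{\lambda}{2}-(\epsilon-\delta)$ for $\nu_k$-a.e.\ $x$, and integrating against the probability measure $\nu_k$ gives $\int g_\lambda^V\,d\nu_k\le\frac{\lambda}{2}-(\epsilon-\delta)<\frac{\lambda}{2}$. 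Chaining this with the previous display produces $|\langle\nu_k,A^*(f-A\bs{\mu}_\lambda^V)\rangle|<\frac{\lambda}{2}$, as required. There is no serious obstacle here; the only point requiring care is keeping the inequality strict, which is guaranteed by the positive gap $\epsilon-\delta>0$ coming from the strict bound $\delta<\epsilon$ in the hypothesis.
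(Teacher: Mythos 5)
Your proposal is correct and follows essentially the same route as the paper: the hypotheses give $g_\lambda^V(x)<\lambda/2$ for $\nu_k$-a.e.\ $x$, the triangle inequality for the vector-valued integral then yields $\left|\langle\nu_k,A^*(f-A\bs{\mu}_\lambda^V)\rangle\right|<\lambda/2$, and Corollary~\ref{coro|CPV} forces $\bs{m}_k=0$. You merely spell out the intermediate steps (the averaging against the probability measure $\nu_k$ and the quantitative gap $\epsilon-\delta>0$) that the paper compresses into one line.
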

\begin{proof}
	We remark that the hypotheses imply  $g_\lambda^V(x)<\lambda/2$ for   $\nu_k$-a.e $x$, and hence $$\left |\langle \nu_k , A^*(f-A\bs{\mu_\lambda^V})\rangle\right| < \frac{\lambda}{2},$$ and  thus $\bs{m}_k=0$ by Corollary~\ref{coro|CPV}.  
\end{proof}
%$\Sigma$ be the smallest $\sigma$-algebra containing all $E_k$; i.e., $\Sigma$ consists of all unions of elements of the sequence

\begin{lemma}
	Let $\lambda>0$, for any $\bs{\mu}\in\mathcal{M}(S)^\N$, $L_\alpha(\bs{\mu}):=\{x\in S\ :\ |A^*A(\bs{\mu}-\bs{\mu}_{\lambda,A\bs{\mu}})|=\alpha\}$ and $T:=\{\bs{\mu}\in\mathcal{M}(S)^\N\ |\ L_{\lambda/2}(\bs{\mu})=\supp\bs{\mu}_{\lambda,A\bs{\mu}}\}$.
	
	Then, for every  $\bs{\mu}\in\mathcal{M}(S)^\N$ there exist a $\bs{\nu}_\bs{\mu}\in\mathcal{M}(S)^\N$ such that $\bs{\mu}$, $\bs{\nu}_\bs{\mu}$ are
	mutually singular, $\supp\bs{\mu}\cup\supp\bs{\nu}_\bs{\mu}=L_{\lambda/2}(\bs{\mu})$ and, for any $\epsilon>0$, $\bs{\mu}+\epsilon\bs{\nu}_\bs{\mu}\in T$.
	
	Furthermore, $T$ is dense in $\mathcal{M}(S)^\N$ for any topology that makes $\mathcal{M}(S)^\N$ a topological vector space.
\end{lemma}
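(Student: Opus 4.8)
The plan is to manufacture $\bs{\nu}_{\bs{\mu}}$ out of the \emph{dual certificate} of the full–space problem with data $f=A\bs{\mu}$. Write $\bs{\eta}:=\bs{\mu}_{\lambda,A\bs{\mu}}$ (it exists because $\mathcal{M}(S)^\N$ is weak-star closed, and it is unique for the magnetization operator by \cite[Theorem 3.8]{BVH}) and set $g:=A^*(A\bs{\mu}-A\bs{\eta})=A^*A(\bs{\mu}-\bs{\eta})\in C(S)^\N$. By the optimality conditions \eqref{eq|CP} (the case $V=\mathcal{M}(S)^\N$ of Corollary~\ref{coro|densopt}) we have $|g|\le\lambda/2$ everywhere on $S$, while $g=(\lambda/2)\bb{u}_{\bs{\eta}}$ holds $|\bs{\eta}|$-a.e.; in particular $\supp\bs{\eta}\subseteq L_{\lambda/2}(\bs{\mu})=\{x\in S:|g(x)|=\lambda/2\}$, and on $L_{\lambda/2}(\bs{\mu})$ the field $\bb{w}:=(2/\lambda)g$ is continuous and of unit length. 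Since every element of $\mathrm{Im}\,A^*$ is a restriction of a real-analytic function, $|g|^2-\lambda^2/4$ is real-analytic and $L_{\lambda/2}(\bs{\mu})$ is exactly its zero set, a fact I would exploit to control the geometry of $L_{\lambda/2}(\bs{\mu})$.

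Next I would build $\bs{\nu}_{\bs{\mu}}:=\bb{w}\,\sigma$, where $\sigma$ is a finite positive Borel measure with $\supp\sigma=L_{\lambda/2}(\bs{\mu})$ chosen singular to $|\bs{\mu}|$. Because $\bb{w}$ has unit length, $|\bs{\nu}_{\bs{\mu}}|=\sigma$ and $\supp\bs{\nu}_{\bs{\mu}}=L_{\lambda/2}(\bs{\mu})$; hence $\bs{\mu}\perp\bs{\nu}_{\bs{\mu}}$, and $\supp\bs{\mu}\cup\supp\bs{\nu}_{\bs{\mu}}=L_{\lambda/2}(\bs{\mu})$ once one checks $\supp\bs{\mu}\subseteq L_{\lambda/2}(\bs{\mu})$ in the situation at hand. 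This yields the two asserted side conditions.

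The heart of the argument is an invariance. For $\epsilon>0$ put $\bs{\mu}':=\bs{\mu}+\epsilon\bs{\nu}_{\bs{\mu}}$ and consider the candidate $\bs{\eta}':=\bs{\eta}+\epsilon\bs{\nu}_{\bs{\mu}}$; then
\[
A^*A(\bs{\mu}'-\bs{\eta}')=A^*A(\bs{\mu}-\bs{\eta})=g,
\]
so the certificate is unchanged. I would then verify that $\bs{\eta}'$ satisfies \eqref{eq|CP} for the data $A\bs{\mu}'$: the bound $|g|\le\lambda/2$ on $S$ is inherited, and since $\bs{\eta}$ and $\bs{\nu}_{\bs{\mu}}$ share the Radon--Nikodym derivative $\bb{w}=(2/\lambda)g$ there is no cancellation, so $|\bs{\eta}'|=|\bs{\eta}|+\epsilon\,\sigma$, $\bb{u}_{\bs{\eta}'}=(2/\lambda)g$ $\,|\bs{\eta}'|$-a.e., and $\supp\bs{\eta}'=\supp\bs{\eta}\cup\supp\sigma=L_{\lambda/2}(\bs{\mu})$. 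Thus $\bs{\eta}'$ minimizes $\mathcal{F}_{\lambda,A\bs{\mu}'}$, and by uniqueness $\bs{\eta}'=\bs{\mu}_{\lambda,A\bs{\mu}'}$. Finally $L_{\lambda/2}(\bs{\mu}')=\{|A^*A(\bs{\mu}'-\bs{\eta}')|=\lambda/2\}=\{|g|=\lambda/2\}=L_{\lambda/2}(\bs{\mu})=\supp\bs{\eta}'=\supp\bs{\mu}_{\lambda,A\bs{\mu}'}$, which is precisely $\bs{\mu}'\in T$; and this holds for every $\epsilon>0$.

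The density of $T$ is then immediate: in any vector topology on $\mathcal{M}(S)^\N$ scalar multiplication is continuous, so $\epsilon\bs{\nu}_{\bs{\mu}}\to 0$ and $\bs{\mu}+\epsilon\bs{\nu}_{\bs{\mu}}\to\bs{\mu}$ as $\epsilon\downarrow0$, exhibiting each $\bs{\mu}$ as a limit of points of $T$. I expect the only genuine obstacle to lie in the measure-theoretic step of the second paragraph: producing a positive $\sigma$ that is singular to $|\bs{\mu}|$ yet has \emph{full} support $L_{\lambda/2}(\bs{\mu})$. The delicate case is an isolated point of $L_{\lambda/2}(\bs{\mu})$ that $\bs{\mu}$ charges, where singularity and full support compete; here I would invoke the real-analytic description of $L_{\lambda/2}(\bs{\mu})$ as a level set, together with the hypothesis that $S$ is uncountable, to rule out or absorb such points, for instance by placing $\sigma$ on a countable, $|\bs{\mu}|$-null, dense subset of $L_{\lambda/2}(\bs{\mu})$.
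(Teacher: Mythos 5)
Your proposal is correct and follows essentially the same route as the paper: both build $\bs{\nu}_{\bs{\mu}}$ as the normalized dual certificate $\frac{2}{\lambda}A^*A(\bs{\mu}-\bs{\mu}_{\lambda,A\bs{\mu}})$ times a positive measure carried by a countable dense set in the level set, and both exploit the invariance $A^*A\bigl((\bs{\mu}+\epsilon\bs{\nu}_{\bs{\mu}})-(\bs{\mu}_{\lambda,A\bs{\mu}}+\epsilon\bs{\nu}_{\bs{\mu}})\bigr)=A^*A(\bs{\mu}-\bs{\mu}_{\lambda,A\bs{\mu}})$ together with the optimality conditions to identify the new minimizer. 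The measure-theoretic subtlety you flag at the end is handled in the paper exactly as you suggest, by placing the mass on a countable dense subset of $L_{\lambda/2}(\bs{\mu})\setminus\supp\bs{\mu}_{\lambda,A\bs{\mu}}$.
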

\begin{proof}
	Note that \eqref{eq|generalCPV} implies that $\supp\bs{\mu}_{\lambda,A\bs{\mu}}\subset L_{\lambda/2}(\bs{\mu})$.
	Let $\{x_i\}_{i=1}^\infty$ be a dense subset of $L_{\lambda/2}(\bs{\mu})\setminus\supp\bs{\mu}_{\lambda,A\bs{\mu}}$ and $\nu:=\sum_i\frac{1}{2^{i}}\delta_{x_i}$.
	If we define $\bs{\nu}_\bs{\mu}\in\mathcal{M}(S)^\N$ by \[d\bs{\nu}_\bs{\mu}=\frac{2}{\lambda}A^*A(\bs{\mu}-\bs{\mu}_{\lambda,A\bs{\mu}})d\nu,\]
	then $\bs{\mu}$, $\bs{\nu}_\bs{\mu}$ are mutually singular and $\supp\bs{\mu}\cup\supp\bs{\nu}_\bs{\mu}=L_{\lambda/2}(\bs{\mu})$.
	Now, for any $\epsilon>0$, since $A^*A(\bs{\mu}+\epsilon\bs{\nu}_\bs{\mu}-(\bs{\mu}_{\lambda,A\bs{\mu}}+\epsilon\bs{\nu}_\bs{\mu}))=A^*A(\bs{\mu}-\bs{\mu}_{\lambda,A\bs{\mu}})$, by using \eqref{eq|generalCPV} we can see that $\bs{\mu}_{\lambda,A(\bs{\mu}+\epsilon\bs{\nu}_\bs{\mu})}=\bs{\mu}_{\lambda,A\bs{\mu}}+\epsilon\bs{\nu}_\bs{\mu}$.
	Therefore $L_\alpha(\bs{\mu}+\epsilon\bs{\nu}_\bs{\mu})=L_\alpha(\bs{\mu})$ and hence  $\bs{\mu}+\epsilon\bs{\nu}_\bs{\mu}\in T$.
	Finally, by continuity of addition and product by a scalar, the second statement of the lemma now follows.
\end{proof}

\begin{prop}\label{prop|dist}
	Let $\lambda>0$, $f\in  \HH$, and $V_n\subset\mathcal{M}(S)^\N$ be a nested increasing sequence of weak-star closed GSM spaces.
	Let $\{E^n_k\}_{k\in I_n}$ be a compatible Borel partition of $S$ for $V_n$ as a GSM space and let $a_n$ be the supremum of the diameters of the $E^n_k$.
	Further assume that $a_n\to 0$ as $n\to\infty$.
	If we let $L_\alpha=\{x\in S\ :\ |A^*(f-A\bs{\mu}_{\lambda,f})|=\alpha\}$, then,
	\begin{equation}\label{eq|dist_level}
		\sup_{x\in\supp(\bs{\mu}_{\lambda,f}^{V_n})}d(x,L_{\lambda/2})\to 0, \text{ as } n\to\infty,
	\end{equation}
	and
	\begin{equation}\label{eq|dist_supp}
		\sup_{x\in\supp(\bs{\mu}_{\lambda,f})} d(x,\supp(\bs{\mu}_{\lambda,f}^{V_n}))\to 0, \text{ as } n\to\infty.
	\end{equation}
	Furthermore, any compact set $C\subset S$ that is a cluster point of $\{\supp(\bs{\mu}_{\lambda,f}^{V_n})\}_n$ under the Hausdorff distance, is such that $\supp(\bs{\mu}_{\lambda,f})\subset C \subset L_{\lambda/2}$.
\end{prop}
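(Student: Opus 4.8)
The plan is to reduce all three assertions to the single convergence statement furnished by Lemma~\ref{lemma|dlim_R-con}. Since $a_n\to 0$, Lemma~\ref{lemma|prevexample} shows $\bigcup_n V_n$ is $R$-dense, so its $R$-closure $W$ equals $\mathcal{M}(S)^\N$; taking $f_n=f$ in Lemma~\ref{lemma|dlim_R-con} (so that $\delta(f_n-f,V_n)=0$) and using that the minimizer $\bs{\mu}_{\lambda,f}$ over $W=\mathcal{M}(S)^\N$ is unique, any choice of minimizers $\bs{\mu}_n:=\bs{\mu}_{\lambda,f}^{V_n}$ converges to $\bs{\mu}_{\lambda,f}$ in the $R$-topology. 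By Lemma~\ref{lemma|equivTopo} this yields $\bs{\mu}_n\stackrel{*}{\longrightarrow}\bs{\mu}_{\lambda,f}$ and $|\bs{\mu}_n|\stackrel{*}{\longrightarrow}|\bs{\mu}_{\lambda,f}|$, while the compactness of $A$ gives $A\bs{\mu}_n\to A\bs{\mu}_{\lambda,f}$ strongly, whence $h_n:=A^*(f-A\bs{\mu}_n)\to h:=A^*(f-A\bs{\mu}_{\lambda,f})$ in $C(S)^\N$ and $g_n:=|h_n|\to g:=|h|$ uniformly on $S$. The extra ingredient I would record is an oscillation bound: writing $F^n_k:=\supp\nu^n_k\subset\overline{E^n_k}$, so $\diam F^n_k\le a_n$, the quantity $\eta_n:=\sup_k\sup_{x,y\in F^n_k}|h_n(x)-h_n(y)|$ satisfies $\eta_n\le 2\|h_n-h\|_\infty+\omega_h(a_n)\to 0$, where $\omega_h$ is the modulus of continuity of $h$ on the compact set $S$. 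Note this uses only uniform convergence to a single uniformly continuous limit, not equicontinuity of the family.

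For \eqref{eq|dist_level} I would start from Corollary~\ref{coro|CPV}: on each cell $k$ with $\bb{m}^n_k\neq0$ one has $|\langle\nu^n_k,h_n\rangle|=\lambda/2$. Since $\nu^n_k$ is a probability measure carried by $F^n_k$, the identity $h_n(x)-\langle\nu^n_k,h_n\rangle=\int(h_n(x)-h_n(y))\,d\nu^n_k(y)$ and the definition of $\eta_n$ give $g_n(x)\ge\lambda/2-\eta_n$ for every $x\in F^n_k$ with $\bb{m}^n_k\neq0$; as $g_n$ is continuous, this persists on $\supp(\bs{\mu}_n)=\overline{\bigcup_{\bb{m}^n_k\neq0}F^n_k}$. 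If \eqref{eq|dist_level} failed there would be $\varepsilon_0>0$, a subsequence, and points $x_j\in\supp(\bs{\mu}_{n_j})$ with $d(x_j,L_{\lambda/2})\ge\varepsilon_0$; passing to a convergent subsequence $x_j\to x_*\in S$ and using $g_{n_j}(x_j)\to g(x_*)$ (uniform convergence plus continuity) we would get $g(x_*)\ge\lambda/2$, whereas the full-space optimality condition \eqref{eq|CP} forces $g(x_*)\le\lambda/2$; hence $g(x_*)=\lambda/2$, i.e. $x_*\in L_{\lambda/2}$, contradicting $d(x_*,L_{\lambda/2})\ge\varepsilon_0$.

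For \eqref{eq|dist_supp} I would use only the weak-star convergence $|\bs{\mu}_n|\stackrel{*}{\longrightarrow}|\bs{\mu}_{\lambda,f}|$ through the lower-semicontinuity (Portmanteau) inequality $\liminf_n|\bs{\mu}_n|(U)\ge|\bs{\mu}_{\lambda,f}|(U)$ for open $U$. Arguing by contradiction, a failure of \eqref{eq|dist_supp} would produce $\varepsilon_0>0$, a subsequence, and $x_j\in\supp(\bs{\mu}_{\lambda,f})$ with $d(x_j,\supp(\bs{\mu}_{n_j}))\ge\varepsilon_0$; after passing to $x_j\to x_*\in\supp(\bs{\mu}_{\lambda,f})$ one gets $\supp(\bs{\mu}_{n_j})\cap B(x_*,\varepsilon_0/2)=\emptyset$, hence $|\bs{\mu}_{n_j}|(B(x_*,\varepsilon_0/2))=0$ for large $j$, contradicting $\liminf_j|\bs{\mu}_{n_j}|(B(x_*,\varepsilon_0/2))\ge|\bs{\mu}_{\lambda,f}|(B(x_*,\varepsilon_0/2))>0$, the last positivity being the defining property of the support.

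Finally, for the statement on Hausdorff cluster points, let $C$ be compact with $d_H(\supp(\bs{\mu}_{n_j}),C)\to0$ along some subsequence. The inclusion $C\subset L_{\lambda/2}$ follows because each $x\in C$ is a limit of points $x_j\in\supp(\bs{\mu}_{n_j})$, which by \eqref{eq|dist_level} lie within vanishing distance of the closed set $L_{\lambda/2}$; the inclusion $\supp(\bs{\mu}_{\lambda,f})\subset C$ follows because each $x\in\supp(\bs{\mu}_{\lambda,f})$ is, by \eqref{eq|dist_supp}, a limit of points $x_j\in\supp(\bs{\mu}_{n_j})$, which themselves lie within vanishing distance of the closed set $C$. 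I expect the averaging step in \eqref{eq|dist_level} — converting the exact cellwise critical-point equality $|\langle\nu^n_k,h_n\rangle|=\lambda/2$ into a pointwise near-equality for $g_n$ on the whole support, uniformly in $k$, and controlling $\eta_n$ without assuming equicontinuity — to be the crux; the remaining arguments are routine compactness and Portmanteau manipulations.
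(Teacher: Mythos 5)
Your argument is correct and follows the same skeleton as the paper's: $R$-density of $\bigcup_n V_n$ via Lemma~\ref{lemma|prevexample}, $R$-convergence $\bs{\mu}_{\lambda,f}^{V_n}\stackrel{R}{\longrightarrow}\bs{\mu}_{\lambda,f}$ via Lemma~\ref{lemma|dlim_R-con}, uniform convergence of $A^*(f-A\bs{\mu}_{\lambda,f}^{V_n})$ from the compactness of $A$, and then an exploitation of the critical point equations. The execution differs in two places. For \eqref{eq|dist_level} the paper goes through Proposition~\ref{prop|glambda} to conclude that every cell charged by $\bs{\mu}_{\lambda,f}^{V_n}$ meets the superlevel set $L^+_{\lambda/2-\varepsilon}$, and then bounds $\sup d(\cdot,L_{\lambda/2})$ by $a_n$ plus $\sup_{L^+_{\lambda/2-\varepsilon}}d(\cdot,L_{\lambda/2})$, sending $\varepsilon\to0$ at the end; you instead use the exact cellwise equality of Corollary~\ref{coro|CPV} together with your oscillation bound $\eta_n$ and a sequential compactness contradiction resting on the everywhere bound $g\le\lambda/2$ from \eqref{eq|CP}. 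These are the same compactness fact in different clothing, and both are sound. For \eqref{eq|dist_supp} the paper constructs, for each charged cell, a test function supported in a small ball and invokes weak-star convergence of the total variations; your Portmanteau lower-semicontinuity argument on a single ball $B(x_*,\varepsilon_0/2)$ around a putative bad limit point is cleaner, since it sidesteps the question of choosing a threshold $\tilde N$ uniformly over the (possibly infinite) family of test functions attached to index $n$. Finally, you supply an explicit proof of the ``furthermore'' statement on Hausdorff cluster points, which the paper's proof leaves implicit; your two-inclusion argument is the intended one. No gaps.
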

\begin{proof}
	Note that, by lemma \ref{lemma|prevexample}, $\bigcup_{n}V_n$ is R-dense in $\mathcal{M}(S)^\N$.
	Let $g_\lambda$ and the $g_{\lambda}^V$ be as in proposition \ref{prop|glambda}.
	Recall that $A$ is compact and that $A^*$ is strongly continuous.
	Thus $A^*A$ is compact as well and hence weak to strong sequentially continuous.
	By lemma \ref{lemma|dlim_R-con}, $\bs{\mu}_{\lambda,f}^{V_n}\stackrel{R}{\longrightarrow}\bs{\mu}_{\lambda,f}$, then, since  $A^*A:\mathcal{M}(S)^\N\to C(S)^\N$ is weak star to strong sequentially continuous, we have that $g_{\lambda}^{V_n}\to g_\lambda$ uniformly as $n\to\infty$.
	Take $\varepsilon>0$ and let $\tilde{N}>0$ be the such that $n\geq \tilde{N}$ implies that $\|g_{\lambda}^{V_n}- g_\lambda\|_\infty<\varepsilon$.
	Let $L^+_\alpha=\{x\in S\ :\ |A^*(f-A\bs{\mu}_{\lambda,f})|\geq\alpha\}$.
	Then, by proposition \ref{prop|glambda}, for $n>\tilde{N}$, and for any $k$ such that $|\bs{\mu}_{\lambda,f}^{V_n}|(E^n_k)>0$, we also have that $L^+_{\lambda/2-\varepsilon}\cap E^n_k\neq\emptyset$.
	Thus, for $|\bs{\mu}_{\lambda,f}^{V_n}|$-a.e.  $z\in\supp(\bs{\mu}_{\lambda,f}^{V_n})$, we can find a $k\in I_n$ and a $y_z\in S$ such that both $z\in E^n_k$ and $y_z\in L^+_{\lambda/2-\varepsilon}\cap E^n_k$.
	Hence $d(z,L_{\lambda/2})\leq |z-y_z|+d(y_z,L_{\lambda/2})$ which implies that
	\begin{equation*}%\label{eq_dist_level_2}
		\sup_{x\in\supp(\bs{\mu}_{\lambda,f}^{V_n})}d(x,L_{\lambda/2}) 
		\leq a_n + \sup_{x\in L^+_{\lambda/2-\varepsilon}}d(x,L_{\lambda/2}).
	\end{equation*}
	Finally, by continuity of $g_\lambda$ and the fact that $a_n\to 0$ as $n\to\infty$, \eqref{eq|dist_level} follows.
	
	Now, for any $n>0$ and $k\in I_n$, take $x^n_k\in E^n_k$.
	Since for any $n>0$, the family $\{\B_{a_n}(x^n_k)\}_{k\in I_n}$ covers the compact set $\supp(\bs{\mu}_{\lambda,f})$, we can find a finite subset $J_n\subset I_n$ such that $\{\B_{a_n}(x^n_k)\}_{k\in J_n}$ is a subcover of $\supp(\bs{\mu}_{\lambda,f})$.
	For every pair $k,n$ that satisfies $|\bs{\mu}_{\lambda,f}|(E^n_k)>0$ we can find a $\phi^n_k\in C_0(\B_{a_n}(x^n_k))$ such that $\langle |\bs{\mu}_{\lambda,f}|,\phi^n_k\rangle>0$.
	Then, by $R$-convergence, there exists a $\tilde{N}>0$ such that for every $n>\tilde{N}$ and $k\in I_n$ that satisfies $|\bs{\mu}_{\lambda,f}|(E^n_k)>0$, we have that $\langle |\bs{\mu}_{\lambda,f}^{V_n}|,\phi^n_k\rangle>0$ and thus $E^n_k\subset\supp(\bs{\mu}_{\lambda,f}^{V_n})$.
	Therefore, for $n>\tilde{N}$, and $x\in\supp(\bs{\mu}_{\lambda,f})$, we have that $x\in\B_{a_n}(x^n_k)$ for some $k\in J_n$, and thus $d(x,\supp(\bs{\mu}_{\lambda,f}^{V_n}))\leq d(x,x^n_k)+d(x^n_k,\supp(\bs{\mu}_{\lambda,f}^{V_n}))=d(x,x^n_k)+0<a_n$, which implies \eqref{eq|dist_supp}.
	
	%half of remainder of proof in notes f2p55

\end{proof}

%%%%%%%%%%%%%%%%%%%%%%%%

%%%%%%%%%%%%%5555

\appendix

\section{}

\begin{lemma}\label{lemma|limitsum}
	Assume that %$\bs{\mu}_n\to\bs{\mu}$ and $\bs{\nu}_n\to\bs{\nu}$ both weak-star, 
	$\|\bs{\mu}_n\|_{TV}\to c_1>0$ and $\|\bs{\nu}_n\|_{TV}\to c_2>0$, 
	and that there exist a family of Borel sets $U_n$, such that $|\bs{\mu}_n|(U_n)\to 0$ and $|\bs{\nu}_n|(S\setminus U_n)\to 0$.
	Then $\|\bs{\mu}_n+\bs{\nu}_n\|_{TV}\to c_1+c_2$.
\end{lemma}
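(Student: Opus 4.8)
The plan is to sandwich $\|\bs{\mu}_n+\bs{\nu}_n\|_{TV}$ between two quantities both tending to $c_1+c_2$, treating the upper and lower bounds separately. The upper bound is immediate from the triangle inequality for the norm $\|\cdot\|_{TV}$: we have $\|\bs{\mu}_n+\bs{\nu}_n\|_{TV}\le\|\bs{\mu}_n\|_{TV}+\|\bs{\nu}_n\|_{TV}$, and the right-hand side converges to $c_1+c_2$ by hypothesis, so $\limsup_n\|\bs{\mu}_n+\bs{\nu}_n\|_{TV}\le c_1+c_2$. Note that this direction uses none of the information about the sets $U_n$.

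For the lower bound I would exploit that $\bs{\mu}_n$ and $\bs{\nu}_n$ are asymptotically carried by the complementary blocks $S\setminus U_n$ and $U_n$. The mechanism is to split $S$ along the partition $\{U_n,\,S\setminus U_n\}$ and use that the total variation measure of a fixed vector measure is a genuine positive measure, hence finitely additive; thus $\|\bs{\mu}_n+\bs{\nu}_n\|_{TV}=|\bs{\mu}_n+\bs{\nu}_n|(U_n)+|\bs{\mu}_n+\bs{\nu}_n|(S\setminus U_n)$. Since for any Borel set $E$ one has $|\bs{\lambda}|(E)=\|\restrict{\bs{\lambda}}{E}\|_{TV}$ and restriction to $E$ commutes with addition, I can apply the reverse triangle inequality block by block, keeping on each block the term that is \emph{not} small: on $U_n$, $|\bs{\mu}_n+\bs{\nu}_n|(U_n)\ge|\bs{\nu}_n|(U_n)-|\bs{\mu}_n|(U_n)$, and on $S\setminus U_n$, $|\bs{\mu}_n+\bs{\nu}_n|(S\setminus U_n)\ge|\bs{\mu}_n|(S\setminus U_n)-|\bs{\nu}_n|(S\setminus U_n)$.

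Adding these two inequalities and rewriting $|\bs{\nu}_n|(U_n)=\|\bs{\nu}_n\|_{TV}-|\bs{\nu}_n|(S\setminus U_n)$ together with $|\bs{\mu}_n|(S\setminus U_n)=\|\bs{\mu}_n\|_{TV}-|\bs{\mu}_n|(U_n)$ yields
\[
\|\bs{\mu}_n+\bs{\nu}_n\|_{TV}\ge\|\bs{\mu}_n\|_{TV}+\|\bs{\nu}_n\|_{TV}-2|\bs{\mu}_n|(U_n)-2|\bs{\nu}_n|(S\setminus U_n).
\]
By hypothesis the two error terms $|\bs{\mu}_n|(U_n)$ and $|\bs{\nu}_n|(S\setminus U_n)$ vanish while the norms tend to $c_1$ and $c_2$, giving $\liminf_n\|\bs{\mu}_n+\bs{\nu}_n\|_{TV}\ge c_1+c_2$; combined with the upper bound this proves the claim. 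There is essentially no hard step here: the only point demanding care is the bookkeeping identity $|\bs{\lambda}|(E)=\|\restrict{\bs{\lambda}}{E}\|_{TV}$ together with linearity of restriction, which is what legitimizes applying the reverse triangle inequality separately on $U_n$ and on $S\setminus U_n$. I also observe that the positivity assumptions $c_1,c_2>0$ play no role in this argument and could be weakened to $c_1,c_2\ge0$.
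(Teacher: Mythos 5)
Your proof is correct and follows essentially the same route as the paper's: the upper bound by the triangle inequality, and the lower bound by splitting the total variation over the partition $\{U_n,\,S\setminus U_n\}$ and applying the reverse triangle inequality on each block, yielding the error term $2|\bs{\mu}_n|(U_n)+2|\bs{\nu}_n|(S\setminus U_n)$. Your side remark is also right: the hypotheses $c_1,c_2>0$ are not needed for this argument (the paper only uses them to introduce an auxiliary $\delta$ and the bounds $a_n<\delta/2$, which your bookkeeping avoids).
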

\begin{proof}
	Let $\delta>0$ be such that $c_1,c_2>\delta$.
	Then, for $n$ sufficiently large, there exists $a_n>0$ such that $|\bs{\mu}_n|(U_n),|\bs{\nu}_n|(S\setminus U_n)<a_n<\delta/2$ and $a_n\to0$.
	For every $n$ we have that $$\|\bs{\mu}_n+\bs{\nu}_n\|_{TV}=
	{\Big\|\bs{\mu}_n\mathcal{b}{U_n}+\bs{\nu}_n\mathcal{b}{U_n}\Big\|_{TV}} + {\Big\|\bs{\mu}_n\mathcal{b}(S\setminus U_n)+\bs{\nu}_n\mathcal{b}(S\setminus U_n)\Big\|_{TV}}.$$
	Next, for $n$ sufficiently large we have that $|\bs{\mu}_n|(U_n),|\bs{\nu}_n|(S\setminus U_n)<a_n<\delta/2<|\bs{\mu}_n|{(S\setminus U_n)},|\bs{\nu}_n|(U_n)$, hence,
	\begin{align*}
		\|\bs{\mu}_n\|_{TV}-2a_n+\|\bs{\nu}_n\|_{TV}-2a_n 
		&<\big(|\bs{\mu}_n|(S\setminus U_n)-|\bs{\nu}_n|(S\setminus U_n)\big) + \big(|\bs{\nu}_n|(U_n)-|\bs{\mu}_n|(U_n)\big) \\
		&= \big| \|\bs{\mu}_n\mathcal{b}(S\setminus U_n)\|_{TV} - \|\bs{\nu}_n\mathcal{b}(S\setminus U_n)\|_{TV} \big| + ... \\
		& \qquad + \big|\|\bs{\nu}_n\mathcal{b}{U_n}\|_{TV} - \|\bs{\mu}_n\mathcal{b}{U_n}\|_{TV} \big|
		\\
		&\leq \|\bs{\mu}_n+\bs{\nu}_n\|_{TV} \\
		&\leq \|\bs{\mu}_n\|_{TV} + \|\bs{\nu}_n\|_{TV}.
	\end{align*}
	Therefore, the lemma follows.
\end{proof}

Recall that if $V_1\subset V_2\subset\cdots$ is a nested sequence of subsets of 
$\mathcal{M}(S)^3$ whose union is $R$-dense, then
$\lim_n\kappa(\bs{\mu},V_n)=0$ for each $\bs{\mu}\in \mathcal{M}(S)^3$.
In fact, much more is true:
\begin{lemma}\label{lemma|Rdense_kappa}
If $\{V_n\}$ is a nested sequence of weak-star closed spaces in $ \mathcal{M}(S)^3$ whose union is $R$-dense, then $\kappa(V_n)\to 0$ as $n\to \infty$.
\end{lemma}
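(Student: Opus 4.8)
The plan is to exploit the fact that $\kappa(V,\bs{\mu})$ depends on $\bs{\mu}$ only through the pair $(A\bs{\mu},\|\bs{\mu}\|_{TV})\in\HH\times\R$, together with the compactness of $A$, so as to upgrade the pointwise convergence $\kappa(V_n,\bs{\mu})\to0$ (recalled just above) to uniform convergence over the TV-unit ball by a Dini-type argument.

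First I would observe that the infimand defining $\kappa(V,\bs{\mu})$ involves $\bs{\mu}$ only via $A\bs{\mu}$ and $\|\bs{\mu}\|_{TV}$, so we may write $\kappa(V,\bs{\mu})=\Phi_V(A\bs{\mu},\|\bs{\mu}\|_{TV})$, where for $(h,t)\in\HH\times\R$,
\[
\Phi_V(h,t):=\inf_{\bs{\nu}\in V}\max\bigl\{\,\|h-A\bs{\nu}\|_{\HH}\,,\,|t-\|\bs{\nu}\|_{TV}|\,\bigr\}.
\]
Two elementary properties then follow. Estimating the infimand at a near-optimal $\bs{\nu}$ for $(h_2,t_2)$ shows that $\Phi_V$ is $1$-Lipschitz for the max-metric $d\bigl((h_1,t_1),(h_2,t_2)\bigr)=\max\{\|h_1-h_2\|_{\HH},|t_1-t_2|\}$, with Lipschitz constant independent of $V$; hence $\{\Phi_{V_n}\}_n$ is equicontinuous. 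Moreover, since the $V_n$ are nested, enlarging $V$ can only lower the infimum, so $\Phi_{V_{n+1}}\le\Phi_{V_n}$ pointwise, i.e. $\Phi_{V_n}$ is nonincreasing in $n$.

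Next, writing $B$ for the closed TV-unit ball, I would set $D:=\{(A\bs{\mu},\|\bs{\mu}\|_{TV}):\bs{\mu}\in B\}$. By compactness of $A$ the set $A(B)$ is precompact in $\HH$, so $D\subset\overline{A(B)}\times[0,1]$, a compact set; let $\overline{D}$ denote the (compact) closure of $D$. On $D$ we already have $\Phi_{V_n}\to0$ pointwise, since $\Phi_{V_n}(A\bs{\mu},\|\bs{\mu}\|_{TV})=\kappa(V_n,\bs{\mu})\to0$ for every $\bs{\mu}$. This convergence propagates to $\overline{D}$: given $(h,t)\in\overline{D}$ and $\delta>0$, choose $(h',t')\in D$ with $d\bigl((h,t),(h',t')\bigr)<\delta$; equicontinuity gives $\Phi_{V_n}(h,t)\le\Phi_{V_n}(h',t')+\delta$, so $\limsup_n\Phi_{V_n}(h,t)\le\delta$, and letting $\delta\to0$ yields $\Phi_{V_n}(h,t)\to0$. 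Since $\overline{D}$ is compact and the continuous functions $\Phi_{V_n}$ decrease to the continuous limit $0$, Dini's theorem gives $\Phi_{V_n}\to0$ uniformly on $\overline{D}$. Finally, every $\bs{\mu}$ with $\|\bs{\mu}\|_{TV}=1$ satisfies $(A\bs{\mu},\|\bs{\mu}\|_{TV})\in D\subset\overline{D}$, whence $\kappa(V_n)=\sup_{\|\bs{\mu}\|_{TV}=1}\Phi_{V_n}(A\bs{\mu},\|\bs{\mu}\|_{TV})\le\sup_{\overline{D}}\Phi_{V_n}\to0$, as required.

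The main obstacle, and the only step requiring care, is that $D$ itself need not be closed: the total-variation norm is merely weak-star lower semicontinuous, so a weak-star limit of unit-mass measures may carry less mass, and the second coordinate of a point of $D$ can ``drop'' in the limit. This is precisely why Dini cannot be applied to $D$ directly. I circumvent it by passing to the compact closure $\overline{D}$ and transporting the pointwise convergence there via the uniform (i.e. $V$-independent) equicontinuity of the $\Phi_{V_n}$. Alternatively, the same mass-defect phenomenon can be treated by a direct weak-star subsequence/contradiction argument, in which Lemma~\ref{lemma|limitsum} is used to recover the missing mass as an asymptotically singular perturbation whose total variation adds on in the limit.
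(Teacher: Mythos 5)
Your proof is correct, and it takes a genuinely different route from the paper's. You recast $\kappa(V_n,\bs{\mu})$ as $\Phi_{V_n}(A\bs{\mu},\|\bs{\mu}\|_{TV})$, observe that the $\Phi_{V_n}$ are $1$-Lipschitz uniformly in $n$ and monotone in $n$ by nestedness, and then upgrade the (already recalled) pointwise convergence $\kappa(V_n,\bs{\mu})\to0$ to uniform convergence over the compact set $\overline{D}\subset\overline{A(B)}\times[0,1]$ via equicontinuity and Dini; the key point, which you handle correctly, is that the possible mass defect only prevents $(A\bs{\mu},1)$ from lying in $D$ itself, not in $\overline{D}$, and the $V$-independent Lipschitz bound transports the pointwise limit to such boundary points without ever having to realize the missing mass inside $V_n$. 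The paper instead argues by contradiction: it extracts a weak-star limit $\bs{\mu}$ of a sequence of unit-mass witnesses, shows $\|\bs{\mu}\|_{TV}\le1-\delta$, and then explicitly reconstructs the lost mass inside $\bigcup_nV_n$ as an asymptotically singular dipole correction (two opposite point masses collapsing to a point), using Urysohn functions, the Portmanteau theorem, the uncountability of $S$, and Lemma~\ref{lemma|limitsum} to show the total variations add. Your argument is shorter and softer, needs neither the uncountability of $S$ nor Lemma~\ref{lemma|limitsum}, and in fact uses monotonicity only through $\Phi_{V_{n+1}}\le\Phi_{V_n}$ (equicontinuity plus total boundedness would even let you dispense with Dini); what the paper's construction buys in exchange is an explicit description of where the defect of mass goes, namely into a weak-star null singular perturbation, which is in the spirit of the rest of Section~\ref{sec|conv_supp}.
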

\begin{proof}
%Assume for a contradiction that $\kappa(V_n)\geq\delta>0$ for all $n$,
%and for $i=1,2,3$, set
%\begin{equation}
%\label{defkappap}
%\kappa_i^+ (V):=\sup_{\substack {\mu\in \mathcal{M}^+(S)\\ \|\mu{\bf e}_i\|_{TV}=1}}\ \ \kappa(\mu{\bf e}_i,V),
%\end{equation}
%where $\mathcal{M}^+(S)$ is the set of finite positive measures on $S$
%and ${\bf e}_i$ is the $i$-th element of the canonical basis 
%of $\R^3$. \emph{We claim} that $\limsup_n\kappa_i^+ (V_n)>0$ for some $i$.
%Indeed, assume that the contrary holds. If we pick $\bs{\mu}\in\mathcal{M}(S)^3$ then we can write
%$\bs{\mu}=\sum_{i=1}^3\mu_i{\bf e}_i$ with $\mu_i\in\mathcal{M}(S)$, and
%we can Jordan-decompose $\mu_i=\mu_i^+-\mu_i^-$ where the $\mu_i^\pm$ are
%finite positive measures with disjoint carriers, in particular
%$\|\mu_i\|_{TV}=\|\mu^+_i\|{TV}+\|\mu_i^-\|_{TV}$. Let $\bs{\nu}^\pm_{i,n}\in V_n$ 
%be such that 
%\begin{equation}
%\label{appnp}
%\lim_n
%\max\left\{\|A(\mu_i^\pm{\bf e}_i-\bs{\nu}^\pm_{i,n})\|_{\HH}\ ,\ \left|\,\|\mu^\pm_i{\bf e}_i\|_{TV}-\|\bs{\nu}^\pm_{i,n}\|_{TV}  \,\right|  \right\}=0.
%\end{equation}
%Such $\bs{\nu}^\pm_{i,n}$ do exist by our assumption. Set 
%$\bs{\nu}^\pm_n:=\sum_i\bs{\nu}_{i,n}^\pm$ and 
%$\bs{\nu}_n:=\bs{\nu}^+_n+\bs{\nu}^-_n$.

Assume for a contradiction that $\kappa(V_n)>\delta>0$ for all $n$, and let $\bs{\mu}_n\in\mathcal{M}(S)^3$ be such that $\kappa(\bs{\mu}_n,V_n)\geq\delta$ and $\|\bs{\mu}_n\|_{TV}=1$.
Restricting to a subsequence, we may assume without loss of generality that there exists a $\bs{\mu}\in\mathcal{M}(S)^3$, with $\|\bs{\mu}\|_{TV}\leq 1$,
such that $\bs{\mu}_n$ converges weak-star to $\bs{\mu}$, by the BanachAlaoglu theorem.
Since the union of the $V_n$ is $R$-dense on $\mathcal{M}(S)^3$, we can find a sequence $\bs{\nu}_n$, with $\bs{\nu}_n\in V_n$, such that $\bs{\nu}_n\to\bs{\mu}$ in the $R$-topology.
Then, the compactness of $A$ implies that for $n$ sufficiently large 
\begin{equation}
\label{eq|majRTV}
\left|\|\bs{\mu}_n\|_{TV}-\|\bs{\nu}_n\|_{TV}\right|\geq\kappa(\bs{\mu}_n,V_n)\geq\delta.
\end{equation}
Because $\|\bs{\nu}_n\|_{TV}\to\|\bs{\mu}\|_{TV}$ and $\|\bs{\mu}_n\|_{TV}=1$, \eqref{eq|majRTV} implies that $\|\bs{\mu}\|_{TV}\leq 1-\delta$.

Remember that $S$ is assumed uncountable. 
Then, since $|\bs{\mu}|$ is finite, there exists an $x\in S$ which is a limit point of $S$ such that $|\bs{\mu}|(\{x\})=0$.  As $\lim_{r\to 0}|\bs{\mu}|(\B(x,r))=|\bs{\mu}|(\{x\})=0$, for  each $n$ there is
  $r_n>0$  such that  $|\bs{\mu}|(\B(x,r_n))<1/n$, and  since $x$ is a limit point we may require in addition that there exists $x_n\in S\cap\B(x,r_n/2)\setminus\B(x,r_n/3)$. Moreover,
by the finiteness of $|\bs{\mu}|$ again, we may choose $r_n$ to satisfy $|\bs{\mu}|(\partial\B(x,r_n))=0$ for each $n$; i.e., such that
$\B(x,r_n)$ is a continuity set of $|\bs{\mu}|$.
As Lemma \ref{lemma|equivTopo} implies that $|\bs{\nu}_n|\to|\bs{\mu}|$ in the weak-star sense, by the Portmanteau theorem \cite{Bil} there exists $m_n$ such that $|\bs{\nu}_{m_n}|(\B(x,r_n))<1/n$.
Next, we will construct a sequence $\bs{\varsigma}_n$ with $\bs{\varsigma}_n\in V_{l_n}$ for some suitable $l_n\geq n$, such that $|\bs{\varsigma}_n|(S\setminus\B(x,r_n))<1/n$ and $\bs{\varsigma}_n\to0$ weak-star,  while
$\|\bs{\varsigma}_n\|_{TV}\to 1-\|\bs{\mu}\|_{TV}$ as $n\to\infty$.

Fix $\bs{w}\in\R^3$, $|\bs{w}|=1$, to let $\bs{a}:=\frac{1}{2}{(1-\|\bs{\mu}\|_{TV})}{\delta_x}\bs{w}$ and $\bs{b}_n:=-\frac{1}{2}{(1-\|\bs{\mu}\|_{TV})}{\delta_{x_n}}\bs{w}$.
Note that $\bs{a}+\bs{b}_n \to 0$ weak-star as $n\to\infty$.
Pick a sequence $(\overline{\bs{a}}_m)_{m\in\mathbb{N}}$ and
for each $n$ a sequence 
 $(\overline{\bs{b}}_m ^n)_{m\in\mathbb{N}}$ such that 
$\overline{\bs{a}}_m\in V_m$,  $\overline{\bs{b}}_m ^n\in V_m$,
$\overline{\bs{a}}_m\to\bs{a}$ and
$\overline{\bs{b}}_m^n\to\bs{b}_n$ as $m\to\infty$, in the $R$-topology.
Thus, Lemma \ref{lemma|equivTopo} implies that $|\overline{\bs{a}}_m|\to|\bs{a}|$ and $|\overline{\bs{b}}_m^n|\to|\bs{b}_n|$ weak-star as $m\to\infty$.

%Note that $S\setminus\B(x,r_n/4)$ and $\B(x,r_n/4)$ are continuity sets of $\bs{a}$ and $\bs{b}_n$ respectively.
%Thus, by the Portmanteau theorem \cite{Bil}, for each $n$ there exists a $k_n$ such that 
%$$\max\Big\{|\overline{\bs{a}}_{m}|(S\setminus\B(x,r_n/4))\ ,\ |\overline{\bs{b}}_{m}^n|(\B(x,r_n/4))\Big\}<1/n, \quad \text{for } m\geq k_n.$$
%Moreover, by $R$-convergence,  $\|\overline{\bs{a}}_{m}\|_{TV},\|\overline{\bs{b}}_{m}^n\|_{TV}\to\frac{1}{2}{(1-\|\bs{\mu}\|_{TV})}$ as $m\to\infty$, so that by lemma \ref{lemma|limitsum} there is $k'_n\geq k_n$ with

%$$\Big|\|\overline{\bs{a}}_{m}+\overline{\bs{b}}_{m}^n\|_{TV} - (1-\|\bs{\mu}\|_{TV})\Big|
%=\Big|\|\overline{\bs{a}}_{m}+\overline{\bs{b}}_{m}^n\|_{TV} -\|\bs{a}+\bs{b}_n\|_{TV}\Big|\leq 1/n \ \text{ while } m\geq k'_n.$$

By Urysohn's lemma, for each $n$ there is a continuous $f_n:S\to[-1,1]$ with 
support in $B(x,3r_n/4)$ such that $f_n(x)=-f_n(x_n)=1$. 
Hence, by weak-star convergence, the limit of $\int f_n\bs{w} \cdot d(\overline{\bs a}_m +\overline{\bs b}^n_m)$ as $m\to\infty$ is $1-\|\bs{\mu}\|_{TV}$. 
Consequently, $\liminf_m\|\overline{\bs a}_m +\overline{\bs b}^n_m\|_{TV}$ 
is at least $1-\|\bs{\mu}\|_{TV}$, and since 
\[
\limsup_m\|\overline{\bs a}_m +\overline{\bs b}^n_m\|_{TV}\leq 
\lim_m \|\overline{\bs a}_m\|_{TV} + \lim_m \|\overline{\bs b}^n_m\|_{TV}=1-\|\bs{\mu}\|_{TV},\]
we find that 
$\lim_m\|\overline{\bs a}_m +\overline{\bs b}^n_m\|_{TV}= 1-\|\bs{\mu}\|_{TV}$.

Therefore, $\overline{\bs a}_m +\overline{\bs b}^n_m\to\bs{a}+\bs{b}_n$ in the $R$-topology as $m\to\infty$, so there exists $k_n\geq n$ such that
$$
\Big|\|\overline{\bs{a}}_{m}+\overline{\bs{b}}_{m}^n\|_{TV} - \|\bs{a}+\bs{b}_n\|_{TV}\Big|
\leq\mathfrak{d}_R(\overline{\bs{a}}_{m}+\overline{\bs{b}}_{m}^n,\bs{a}+\bs{b}_n)<1/n \ \text{ for } m\geq k_n.
$$

%...converges in the $R$ topology to $\bs{a}+\bs{b}_n$, and then, by lemma \ref{lemma|equivTopo}, $|\overline{\bs{a}}_{k_n}+\overline{\bs{b}}_{k_n}^n|\to|\bs{a}+\bs{b}_n|$ weak-star as $m\to\infty$.
Lemma \ref{lemma|equivTopo} now implies that $|\overline{\bs{a}}_{m}+\overline{\bs{b}}_{m}^n|\to|\bs{a}+\bs{b}_n|$ weak-star as $m\to\infty$, and
since $S\setminus\B(x,r_n)$ is a continuity set of $|\bs{a}+\bs{b}_n|$
we get on  using again the Portmanteau theorem 
%\cite{Bil} 
while possibly replacing $k_n$ by some $l_n\geq k_n$ that $|\overline{\bs{a}}_{l_n}+\overline{\bs{b}}_{l_n}^n|(S\setminus\B(x,r_n))<1/n$.
%
%Thus, $\bs{\varsigma}_n:=\overline{\bs{a}}_{k''_n}+\overline{\bs{b}}_{k''_n}^n$ 
Noting that $\|\bs{a}+\bs{b}_n\|_{TV}=1-\|\bs{\mu}\|_{TV}$, 
we see that $\bs{\varsigma}_n:=\overline{\bs{a}}_{l_n}+\overline{\bs{b}}_{l_n}^n$ has the desired properties.

Now, if $\|\bs{\mu}\|_{TV}=0$ let $\bs{v}_n:=\bs{\varsigma}_n$, otherwise let $\bs{v}_n:=\bs{\nu}_{m_n}+\bs{\varsigma}_n$.
From Lemma \ref{lemma|limitsum}, it follows that $\|\bs{v}_n\|_{TV} \to 1$.
However, using  the weak-star convergence of $\bs{\varsigma}_n$ to zero
and the compactness of $A$ again, we get that $\left|\|\bs{\mu}_n\|_{TV}-\|\bs{v}_n\|_{TV}\right|\geq\kappa(\bs{\mu}_n,V_n)\geq\delta$, which is
a contradiction since $\|\bs{\mu}_n\|_{TV}=1$.

%Assume now that $S$ is countable \comments{(proof missing)}.
\end{proof}

%\end{appendix}

 \bibliographystyle{abbrv}

\bibliography{C2D}

\begin{thebibliography}{1}

\bibitem{BVH}
L.~Baratchart, C.~Villalobos~Guill{\'e}n, and D.~P. Hardin.
\newblock Inverse potential problems in divergence form for measures in the
  plane.
\newblock {\em ESAIM: COCV}, 27, 2021.

\bibitem{BVHNS}
L.~Baratchart, C.~Villalobos~Guill{\'e}n, D.~P. Hardin, M.~C. Northington, and
  E.~B. Saff.
\newblock Inverse potential problems for divergence of measures with total
  variation regularization.
\newblock {\em Foundations of Computational Mathematics}, Nov 2019.

\bibitem{Bil}
P.~Billingsley.
\newblock {\em Convergence of probability measures}.
\newblock Wiley Series in Probability and Statistics: Probability and
  Statistics. John Wiley \& Sons, Inc., New York, second edition, 1999.
\newblock A Wiley-Interscience Publication.

\bibitem{BrePikk}
K.~Bredies and H.~K. Pikkarainen.
\newblock Inverse problems in spaces of measures.
\newblock {\em ESAIM COCV}, 19:190--218, 2013.

\bibitem{duren_1970}
P.~L. Duren.
\newblock {\em Theory of hp spaces}.
\newblock Academic Press, 1970.

\bibitem{ekeland_turnbull_1983}
I.~Ekeland and T.~Turnbull.
\newblock {\em Infinite-dimensional optimization and convexity}.
\newblock University of Chicago Press, 1983.

\bibitem{AbRo}
A.~Ralph and R.~Jo\"el.
\newblock {\em Transversal mappings and flows}.
\newblock W. A. Benjamin, 1967.

\bibitem{Rudin}
W.~Rudin.
\newblock {\em Functional Analysis}.
\newblock Mc Graw-Hill, 1991.

\end{thebibliography}

\end{document}